\theoremstyle{plain}
\newtheorem{thm}{Theorem}[section]
\newtheorem{cor}{Corollary}[section]
\theoremstyle{definition}
\theoremstyle{remark}
\newtheorem*{rem*}{Remark}
\newtheorem*{ack*}{Acknowledgements}
\numberwithin{equation}{section}
\renewcommand{\leq}{\leqslant}
\renewcommand{\geq}{\geqslant}
\numberwithin{equation}{section}
\newtheorem{theorem}{Theorem}[section]
\newtheorem{lemma}[theorem]{Lemma}
\newtheorem*{lemma*}{Lemma}
\newtheorem{claim}{Claim}
\newtheorem*{claim*}{Claim}
\newcommand{\thistheoremname}{}
\newtheorem{genericthm}[thm]{\thistheoremname}
\def\blfootnote{\xdef\@thefnmark{}\@footnotetext}
\newcommand\numberthis{\addtocounter{equation}{1}\tag{\theequation}}
\newcommand{\Z}{\ensuremath{\mathbb{Z}}}
\newcommand{\GL}{\ensuremath{\text{GL}}}
\newcommand{\GS}{\ensuremath{\text{GSpin}}}
\newcommand{\SO}{\ensuremath{\text{SO}}}
\newcommand{\SL}{\ensuremath{\text{SL}}}
\newcommand{\ind}{\text{Ind}}
\newcommand{\inv}{\ensuremath{\prescript{\iota}{}}}
\title[Stability of local gamma factors for GSpin groups]{Stability of local gamma factors arising from the doubling method for general spin groups}
\begin{document}

	
	\author{Siddhesh Wagh}
	\begin{abstract}
		In this work we prove that the local $\gamma$-factor arising from the doubling integrals for split general spin groups is stable. This deep property of the $\gamma$-factor constitutes an important ingredient in the application of the (generalized) doubling method to the construction of a global functorial lift. We obtain our result by adapting the arguments of Rallis and Soudry who proved the stability property for symplectic and orthogonal groups.
	\end{abstract}
	
	\maketitle
	
	\section{Introduction}
	
	Let $F$ be a local non-archimedean field of characteristic $0$, $\psi$ be a nontrivial additive character of $F$, and
	$G$ be the split general spin group of even or odd rank, defined over $F$. 
	Consider an irreducible representation $\pi$ of $G$, and a quasi-character $\tau$ of $F^\times$.
	The doubling method for $\pi\times\tau$ was developed in \cite{CFGK} following the classical construction of Piatetski-Shapiro and Rallis \cite{PSR}, with extension to the general spin group in \cite{CFK, GK}. In this work we obtain the stability result of the local $\gamma$-factor $\gamma(s,\pi\times\tau,\psi)$ defined in \cite{CFK, GK}.
	\blfootnote{The author was supported by the ISRAEL SCIENCE FOUNDATION grant No. 421/17.}
	\begin{thm} \label{thm:1.1}
		The local gamma factor $\gamma(s,\pi\times\tau,\psi)$ is stable, for $\tau$ sufficiently ramified. In detail, given two representations $\pi_1$ and $\pi_2$ that agree on the connected component of the center, there exists a positive integer $N$ such that 
		\begin{equation*}
			\gamma(s,\pi_1\times\tau,\psi) = \gamma(s,\pi_2\times\tau,\psi)
		\end{equation*}
		for all quasi-characters $\tau$ whose conductor has an exponent larger than $N$. 
	\end{thm}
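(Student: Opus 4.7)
The plan is to adapt the Rallis--Soudry strategy for symplectic and orthogonal groups to the split \GS{} setting. Recall that the gamma factor is defined via a local functional equation of the form
\[
\gamma(s,\pi\times\tau,\psi)\cdot Z(s,\varphi,f_s) \;=\; Z(-s,\varphi,M(s,\tau)f_s),
\]
where $Z$ is the doubling zeta integral pairing a matrix coefficient $\varphi$ of $\pi$ against a section $f_s$ of the degenerate principal series on the doubled group $H$ (induced from $\tau\otimes|\det|^{s-1/2}$ on the Siegel parabolic $P$), and $M(s,\tau)$ is the standard intertwining operator. The objective is to show that for $\tau$ sufficiently ramified one can choose test data so that \emph{both} sides of this functional equation depend on $\pi$ only through its restriction to the connected component $Z^0(G)$ of the center.

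First, I would introduce a family of ``stable'' sections $f_s=f_{s,\phi}$ supported on the big Bruhat cell $Pw_0\bar N$, where $\bar N$ is the opposite unipotent radical and $\phi$ is a Schwartz--Bruhat function on $\bar N(F)$ that is the characteristic function of a small lattice scaled by the conductor of $\tau$. This is the analogue of the sections used by Rallis--Soudry, and their behavior under $M(s,\tau)$ is essentially a Fourier transform against $\tau\circ\det$, so the dual side reduces to a similar integral with a twisted section $\widehat f_s$.

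Next, I would unfold $Z(s,\varphi,f_{s,\phi})$ against the embedding $(g,1)\hookrightarrow H$ and perform a change of variables to convert it into an integral of $\varphi(g)$ weighted by $\phi$ composed with a matrix entry and by a character of the similitude factor built from $\tau$. The decisive step is then a partial Bessel / asymptotic argument: as the conductor of $\tau$ grows, the oscillations of $\tau$ force the integrand to be supported on an arbitrarily small neighborhood of the identity in the quotient $G/Z^0(G)$. On such a neighborhood one may replace $\varphi(g)$ by $\varphi(e)$ times the central character, so that the integral factors as a constant (depending only on $\pi|_{Z^0(G)}$, $\phi$, $\tau$, $s$) multiplied by a quantity independent of $\pi$. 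The same localization applied to the dual side yields an analogous factorization, and taking the ratio produces a value of $\gamma(s,\pi\times\tau,\psi)$ that depends on $\pi$ only through $\pi|_{Z^0(G)}$.

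The main obstacle I expect is the similitude character appearing in the \GS{} structure: unlike the Spin or $\SO$ case, the embedding $G\times G\hookrightarrow H$ and the Levi of $P$ both carry an extra one-dimensional torus, and the computation of orbits of the $G\times G$-action on the relevant Schubert cells acquires a similitude twist. One must verify (i) that the ``negligible'' non-central orbits still contribute zero after integrating against $\phi$ once $\tau$ is sufficiently ramified, and (ii) that the bound on the conductor $N$ can be taken uniformly in $s$ in a strip, so that the equality of rational functions in $q^{-s}$ follows from the equality on an open set. Once the orbit analysis is carried out with careful bookkeeping of the similitude factor, the remainder of the argument is a direct transcription of the Rallis--Soudry scheme.
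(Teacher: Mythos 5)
Your overall philosophy (choose special test data in the doubling functional equation so that $\gamma$ is computed by an explicit integral depending on $\pi$ only through its restriction to the connected center) is the right one, but the mechanism you propose has a genuine gap at its decisive step. Your localization argument for the zeta side --- a ``partial Bessel / asymptotic argument'' forcing the matrix coefficient to be evaluated near the identity because of the oscillation of $\tau$ --- is a technique from the generic (Rankin--Selberg / Langlands--Shahidi) setting and is not available here: the theorem concerns \emph{arbitrary} irreducible $\pi$, which is precisely why the doubling method is used, and there is no Whittaker or Bessel model to run such an asymptotic expansion against. In the actual proof the ramification of $\tau$ is never used to localize the matrix coefficient at all. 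Instead one chooses $f_s$ supported on the open orbit $P\cdot\delta\, i(1\times G)\,\delta^{-1}$ so that its restriction to $i(1\times G)$ is the characteristic function (twisted by $\eta=\pi|_{C^0_G}$ on the connected center) of a \emph{fixed} small compact open subgroup $U$ with $V_\pi^U\neq 0$, and takes $v_1\in V_\pi^U$; then $Z(v_1,\widehat v_2,f_s)=m(U_0)$ identically, a nonzero constant, with no ramification hypothesis. Your choice --- a section on the big cell $Pw_0\bar{N}$ given by the characteristic function of a lattice shrinking with the conductor of $\tau$ --- leaves unaddressed whether the zeta side is a nonzero constant independent of $\pi$ (beyond its central character), which you need in order to read $\gamma$ off the functional equation; as sketched, both sides of your ratio still involve $\varphi$ nontrivially and the replacement $\varphi(g)\rightsquigarrow\varphi(e)$ is exactly the unproved point.

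All of the ramification is spent, in the paper, on the intertwined side: the integral defining $M(\chi\times\eta,s)f_s$ at the points $\delta_\varepsilon i(1,\prescript{\iota}{}g)$ is converted by the Cayley transform $h_0=\varepsilon\,\mathfrak{c}(2x)$ into an integral over $\mathfrak{g}'\subset\mathfrak{so}_c$, and vanishing lemmas based on the Jacquet--Shalika approximation $\det(I+v\,\mathfrak{c}^{-1}(ug_0))\equiv 1+\operatorname{trace}(v\,\mathfrak{c}^{-1}(ug_0))$ modulo a high power of $\mathcal{P}_F$, together with conductor estimates, show that this function is supported in $U$ and equals an explicit Gauss-sum-like integral over $\mathfrak{g}(2^{-1}q^{N_\tau-n_\tau-1})$ involving only $\chi$ and $\eta^{1/2}$. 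Stability follows because this expression depends on $\pi$ only through $\eta=\pi|_{C^0_G}$; no orbit analysis of $G\times G$ on Schubert cells and no uniformity in $s$ is needed, since the identity is proved for $\Re(s)\gg 0$ and extended by rationality in $q^{-s}$. Finally, the GSpin-specific difficulties are not where you place them: they lie in the embedding factoring through $(G\times G)/(C^0_G)^{\Delta}$, the appearance of $\eta$ and its square root $\eta^{1/2}$ in the induced data and in the final formula, and the need for canonical choices of preimages (showing $\Upsilon(\delta_\varepsilon i(1,\prescript{\iota}{}g)\delta^{-1})$ is a square on the relevant support, i.e.\ that the term $\zeta_g$ is trivial), none of which is captured by ``bookkeeping of the similitude factor.''
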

	
	The stability results for generic representations of split classical groups are known due to the works of Cogdell, Piatetski-Shapiro \cite{CPS} for Rankin-Selberg method, and Cogdell \textit{et al} \cite{CPSS} using Landlands-Shahidi method for local coefficients. For non-generic representations the results are due to Rallis and Soudry \cite{RS}. Similar results for the symplectic and unitary groups over quadratic extensions were proved by Zhang in \cite{ZH}.  
	
	The actual construction of \cite{CFGK,CFK,GK}, called the generalized doubling method, included irreducible generic representations $\tau$ of $\mathrm{GL}_k$ for all $k\geq1$. The main motivation was to obtain a new proof of global functoriality (to the appropriate $\mathrm{GL_N}$), which is independent of the trace formula and its prerequisites, and relies solely on an integral representation and Converse Theorem. Using the multiplicativity property of the $\gamma$-factor with respect to $\tau$ (proved in \cite[Theorem~27]{CFK}), the stability result for $\gamma(s,\pi\times \tau,\psi)$ was reduced to the case proved here, i.e. to $k=1$ (see \cite[Lemma~51]{CFK}).
	
	To obtain our proof we follow the arguments of Rallis and Soudry \cite{RS}, who proved the stability of the $\gamma$-factor for doubling integrals in the context of orthogonal and symplectic groups. The difficulties here are technical in nature and related to the complications one encounters when dealing with general spin groups. 
	
	We also mention that the definitive treatment of the local doubling method for the case $k=1$ is due to Lapid and Rallis \cite{LR}, with several additional groups added in \cite{GAN,YAM} (at the time, general spin groups were not included).
	
	\begin{ack*}
		I would like to thank Eyal Kaplan for introducing me to this problem and for the numerous valuable, encouraging and inspiring discussions. 
	\end{ack*}

	\section{Basic notation}
	\subsection{Special orthogonal groups}
	Let $F$ be a local non-archimedean field of characteristic 0. We denote by $\mathcal{O}_F$ its ring of integers and let $\mathcal{P}_F$ be the prime ideal of $\mathcal{O}_F$. Assuming the cardinality of $\mathcal{O}_F / \mathcal{P}_F$ is $q$, we denote by $| \cdot |$ the absolute value on $F$, with $|\varpi| = q^{-1}$ for a generator $\varpi$ of $\mathcal{P}_F$.
	
	Let $V$ be a $c$-dimensional vector space over $F$ with a symmetric non-degenerate bilinear form $b$ and an orthogonal basis $B = (v_1,\ldots,v_c)$ such that $b(v_i,v_j)=1$ if $i+j = c+1$ and $0$ otherwise. Let $G_0$ be the group of symmetries of $(V,b)$. Then $G_0$, written as a matrix group with the basis $B$, is isomorphic to the group 
	\begin{equation*}
		O_m(F) = \{g \in \GL_c(F)|\prescript{\mathrm t}{}g J g = J \}
	\end{equation*}
	where $\prescript{\mathrm t}{}g$ is the transpose of $g$ and 
	$$ J = \begin{bmatrix}
		& & 1\\
		& . \cdot^. &  \\
		1 & &
	\end{bmatrix}.$$
	
	We denote by $G_0^0$ the connected component of the identity which is isomorphic to 
	\begin{equation*}
		\SO_c(F) = \{g \in O_c(F)| \det(g) = 1\}
	\end{equation*}
	and the Lie algebra of $G_0$ is denoted by 
	\begin{equation}
		\mathfrak{g} =  \mathfrak{so}_c(F) = \{x \in \text{Mat}_c(F)| \prescript{\mathrm t}{} x J + Jx = 0\}. \label{2.3}
	\end{equation}
	We denote the Borel subgroup of upper triangular elements of $\SO_c$ by $B_{\SO_c}$ and the torus of the diagonal elements by $T_{\SO_c}$. 
	
	\subsection{Split general spin groups}
	We shall first define the group Spin$_c$ which is the algebraic double cover of SO$_c$ following the description from \cite{CFK}. We denote by $B_{\text{Spin}_c}$ the Borel subgroup of Spin$_c$ which is the preimage of $B_{\text{SO}_c}$. Let $\epsilon_i$ denote the pullback of the $i$-th coordinate function from $T_{\text{SO}_c}$ to $T_{\text{Spin}_c}$, the torus of Spin$_c$. Define $\epsilon_j^\vee $ such that $\langle\epsilon_i,\epsilon_j^\vee \rangle = \delta_{i,j}$, where $\langle, \rangle$ is the standard pairing. The set of simple roots of Spin$_c$ is $\Delta_c = \{\alpha_0,\ldots,\alpha_{n-1}\}$ with $n = \lfloor c/2\rfloor$. Here $\alpha_i = \epsilon_i - \epsilon_{i+1}$ for $0 \leq i < n-1$, $\alpha_{n-1} = \epsilon_{n-1} +\epsilon_n$ if $c$ is even and $\alpha_{n-1} = \epsilon_{n-1}$ otherwise. 
	
	We will identify the split general spin group $G = \text{GSpin}_c$ with the Levi subgroup of Spin$_{c+2}$ obtained by removing $\alpha_0$ from $\Delta_{c+2}$. This fixes the Borel subgroup $B_G$ of GSpin$_c$. Note that Spin$_c$ is the derived group of GSpin$_c$, GSpin$_0$ = GSpin$_1$ = GL$_1$ and GSpin$_2$ = GL$_1$ $\times$ GL$_1$. Define a ``canonical'' character $\Upsilon$ of GSpin$_c$ as the lift of $-\epsilon_0$. Let
	\begin{equation*}
		C^0_G = \{\mathfrak{r}^\vee_c(t): t \in F^\times\}, \quad \mathfrak{r}_c = 
		\begin{cases}
			\alpha_0 \quad \hfill c=0, \\
			\alpha_0 + \alpha_1 \quad \hfill c= 2,\\
			2 \sum_{i=0}^{n-2} \alpha_i + \alpha_{n-1} + \alpha_n \quad \hfill c = 2n > 2,\\
			2 \sum_{i=0}^{n-1} \alpha_i + \alpha_n \quad \hfill c = 2n+1.
		\end{cases}
	\end{equation*}
	For odd $c$ or $c=0$, $C^0_G = C_G$, the center of $G$; for even $c$, $C^0_G$ is the connected component of $C_G$. The Weyl group $W(G)$ of $G$ is canonically isomorphic with $W(\SO_c)$ and given a permutation matrix $w_0 \in \SO_c$ the preimage of $w_0$ in Spin$_c$ consists of 2 elements which differ by an element in $C_{\text{Spin}_c}$. We identify $w \in H = \GS_{2c}$ with each $w_0$ by making a choice of such a representative in $G$. 
	The unipotent subgroups of $H$ are isomorphic to those of $\SO_{2c}$.
	For a definition of general spin groups using the root datum refer to \cite{Asg02, AS06, HS16}.
	
	There is a natural homomorphism from GSpin$_c$ to SO$_c$ sending $g \in \text{GSpin}_c$ to the map $v \mapsto gvg^{-1}$, as given by Deligne in \cite{Del}. The kernel of this map is $F^\times$, giving us the short exact sequence of algebraic groups
	\begin{equation}
		1 \rightarrow \GL_1 \rightarrow \text{GSpin}_c \xrightarrow{\text{proj}} \text{SO}_c \rightarrow 1. \label{SES}
	\end{equation}
	
	\section{The doubling method for the general spin group}
	
	\subsection{The embedding}
	We can define an embedding of $\SO_c \times \SO_c$ into $\SO_{2c}$ and extend it to $\GS_{2c}$ by using the definition above. For $(g_1,g_2) \in \SO_c \times \SO_c$ and $c =2n$, define 
	\begin{equation*}
		i_0(g_1,g_2) = \begin{bmatrix}
			g_{1,1} &  & g_{1,2}\\
			& g_2 &   \\
			g_{1,3} &   & g_{1,4}       
		\end{bmatrix}
	\end{equation*}
	where $g_1 = \begin{bsmallmatrix}
		g_{1,1} &  g_{1,2}\\
		g_{1,3}& g_{1,4}  
	\end{bsmallmatrix}$, $g_{1,i} \in \text{Mat}_c$, the $c \times c$ matrices over $F$. For $c = 2n+1$, take the column vectors $e_{\pm i}$, $1 \leq i \leq 2n+1$ with Gram matrix $J_{2c}$. Let
	\begin{align*}
		&b = (e_1,\ldots,e_{2n}, (1/2) e_{2n+1}-e_{-2n-1},(1/2)e_{2n+1}+e_{-2n-1},e_{-2n},\ldots,e_{-1}),\\
		&b_1 = (e_1,\ldots,e_n, (1/2)e_{2n+1}-e_{-2n-1},e_{-n},\ldots,e_{-1})\\
		&b_2 = (e_{n+1},\ldots,e_{2n},(1/2)e_{2n+1}+e_{-2n-1},e_{-2n},\ldots,e_{-n-1})\\
		&m = \text{diag}\left(I_{c-1},\begin{bsmallmatrix}
			\frac{1}{2} & \frac{1}{2}\\
			-1 & 1
		\end{bsmallmatrix}, I_{c-1}\right).
	\end{align*}
	The Gram matrices of $(b,b_1,b_2)$ are $(J_{2c}, \text{diag}(I_n,-1,I_n)J_c, J_c)$. We define the left copy of $\SO_{c}$ using $b_1$, i.e. group of matrices $g_1 \in \SL_{c}$ such that 
	\begin{equation*}
		\prescript{t}{}{g_1} \, \text{diag}(I_n,-1,I_n)J_c \,g_1 = \text{diag}(I_n,-1,I_n)J_c.
	\end{equation*}
	The right copy is defined using $b_2$ and our standard convention. For each $i$, extend $g_i$ by
	letting it fix the vectors of $b_{3-i}$, then write this extension as a matrix $g'_i \in \SO_{2c}$ with respect to $b$. Now $\prescript{m}{}g'_1 (= m g'_1m^{-1})$ and $\prescript{m}{}g'_2$ commute and 
	\begin{equation*}
		i_0(g_1,g_2) = [\prescript{m}{}g'_1\prescript{m}{}g'_2]
	\end{equation*}
	
	We extend the mapping to $i : G\times G \rightarrow H$ such that it is injective on the product of derived groups via the embedding of \cite{CFK}. The two commuting copies of $G$ intersect non-trivially and the map factors through $\faktor{G\times G}{(C^0_G)^\Delta}  \rightarrow H$. 
	Starting with the derived groups, the embedding described above for the orthogonal groups
	extends to an embedding of the direct product of derived groups, since it identifies each root
	subgroup of a copy of $G$ with a unipotent subgroup in $H$. The first coordinate
	map of the left copy is identified with $-\epsilon_0$, and the right copy with $\epsilon_0$. 
	
	For $z \in C^0_G$, we have $(z,1) = (1,z^{-1}) \in C_H^0$ and 
	\begin{equation*}
		(G,1) \cap (1,G)  = (C^0_G,1) \cap (1,C^0_G) = C^0_H
	\end{equation*}
	with $(z,z)$ being the identity element. For the right copy, let $g \mapsto \prescript{\iota}{}{g} (= \iota g \iota^{-1})$ denote the outer involution of $g$ which is the canonical extension of the involution in \cite[Section 3.4]{CFK}. 
	For further details, refer to \cite{CFK}.

	\subsection{The integral}
	Let $P$ be the standard maximal parabolic subgroup of $H$ with the Levi part $M_P = \GL_c \times \GL_1$. For quasi-characters $\chi,\eta$ of $F^\times$, let $\chi \times \eta \coloneqq ( \chi \circ \det) \otimes \eta$ be a character of $M_P$. For a complex parameter $s$, let $V(s, \chi \times \eta)$ be the space of $\ind_P^H((\chi \circ \det)|\det|^{s-1/2} \otimes \eta)$. The induction is normalized. Let $\pi$ be an irreducible representation of $G$, acting in a space $V_\pi$. Consider
	the contragredient representation $\widehat{\pi}$ acting in $\widehat{V}_\pi$, the smooth dual of $V_\pi$ and denote by $\langle, \rangle$ the canonical $G$-invariant bilinear form on $V_\pi \times \widehat{V}_\pi$. Let $v_1 \in V_\pi$, $\widehat{v}_2  \in \widehat{V}_\pi$, and let $f_{s} \in V(s, \chi \times \eta)$ be a holomorphic section for $\eta = \pi\vert_{C^0_G}$. The local zeta integrals attached to $\pi \times \chi$ by the doubling method are
	\begin{equation*}
		Z(v_1, \widehat{v}_2, f_s) = \int_{G/C_G^0} \langle v_1, \pi(g)\widehat{v}_2 \rangle f_{s}(\delta i(1, \prescript{\iota}{} g) ) dg.
	\end{equation*}
	We use the notation $p = \{p_0,t\}$ to denote $p \in P$ in terms of its projection $p_0 =$ proj$(p)$ and $t \in F^\times$, the $\GL_1$ part of its Levi. Here $\delta = \mathfrak{p} \delta_0 \delta_1$ with
	\begin{align*}
		&\mathfrak{p} = 
		\left\{\begin{bmatrix}
			& -I_n & & \frac{1}{2} I_n \\
			I_n & & \frac{1}{2}I_n & \\
			& & & I_n \\
			& & -I_n &
		\end{bmatrix},1\right\} \in P, \\
		&\text{ proj}(\delta_0) = 
		\begin{bmatrix}
			& I_{c} \\
			I_{c} & 
		\end{bmatrix},
		\text{ proj}(\delta_1) = 
		\begin{bmatrix}
			I_{c} & A \\
			& I_{c}
		\end{bmatrix} 
	\end{align*}
	where proj$(g)$ is the projection of $g$ onto $\SO_{2c}$ and 
	$A = 
	\begin{bsmallmatrix}
		-I_n & \\
		& I_n
	\end{bsmallmatrix}$ for $c = 2n$. For $c =2n+1$, we have
	\begin{align*}
		&\mathfrak{p} = 
		\left\{\begin{bmatrix}
			& & I_n & & & \frac{1}{2} I_n \\
			& 1 & & & 0 & \\
			-I_n & & & \frac{1}{2} I_n & &\\
			& & & & & -I_n \\
			& & & & 1 & \\
			& & & I_n & & 
		\end{bmatrix},1\right\} \in P,\\
		&\text{ proj}(\delta_0) = 
		\begin{bmatrix}
			& I_c \\
			I_c & 
		\end{bmatrix} 
		\text{diag}
		\left(\begin{bmatrix}
			I_n & & \\
			& & -1 \\
			& I_n &
		\end{bmatrix},
		\begin{bmatrix}
			& I_n &\\
			-1 & & \\
			& & I_n
		\end{bmatrix}\right)j_c,\\
		&\text{ proj}(\delta_1) = 
		\begin{bmatrix}
			I_{c} & A \\
			& I_{c}
		\end{bmatrix} 
	\end{align*}
	with $A = \begin{bsmallmatrix}
		& -I_n & \\
		& & I_n \\
		0 & & 
	\end{bsmallmatrix}$. Here \begin{equation*}
		j_c = \begin{cases}
			\begin{bsmallmatrix}
				I_{2n} & & & \\
				& & 1 & \\
				& 1 & & \\
				& & & I_{2n}
			\end{bsmallmatrix} \quad & c = 2n+1 \\
			I_{2c} & c = 2n.
		\end{cases}
	\end{equation*} 
	Note, our $\delta$ differs from $\delta = \delta_0\delta_1$ chosen in \cite{CFK} by $\mathfrak{p} \in P$. However, our $Z(v_1, \widehat{v}_2, f_s)$ is identical to the local zeta function $Z'(v_1, \widehat{v}_2, f_s)$ in \cite{CFK} since
	\begin{align*}
		Z(v_1, \widehat{v}_2, f_s) =& \int_{G/C_G^0} \langle  v_1, \pi(g)\widehat{v}_2 \rangle f_{s}(\delta i(1, \prescript{\iota}{} g) ) dg \\
		=&\int_{G/C_G^0} \langle  v_1, \pi(g) \widehat{v}_2 \rangle f_{s}(\mathfrak{p}\delta_0\delta_1 i(1, \prescript{\iota}{} g) ) dg\\
		=& \chi(1)\eta(1)|1|^{c(s-1/2)} \int_{G/C_G^0} \langle v_1,  \pi(g) \widehat{v}_2 \rangle f_{s}(\delta_0\delta_1 i(1, \prescript{\iota}{} g) ) dg\\
		=& Z'(v_1, \widehat{v}_2, f_s).
	\end{align*}
	The integral converges absolutely in a right-half plane and continues to a meromorphic function on the whole plane. This function is rational in $q^{-s}$. We keep denoting the meromorphic continuation by $Z(v_1, \widehat{v}_2, f_s)$. 
	
	Consider the intertwining operator 
	\begin{equation*}
		M(\chi \times \eta, s): V(s, \chi \times \eta) \rightarrow V(1-s, \chi'\times \eta)
	\end{equation*}
	with $\chi'\times \eta = (\eta^{-1}\chi^{-1}\circ \det) \otimes \eta$. The operator is defined for $\Re(s) \gg 0$ by 
	\begin{equation*}
		M(\chi \times \eta, s)f_{s}( h) = \int_{U_{P'}}f_s(w_p^{-1} u h)du,
	\end{equation*}
	where $U_{P'}$ is the unipotent radical of $P'$ which can be identified with the lift of $j_c P_0 j_c$. Note, $U_P \cong \{\begin{bsmallmatrix}
		1 & x\\
		& 1
	\end{bsmallmatrix}
	\big\vert
	x \in \text{Mat}_c(F); \, \prescript{\mathrm t}{} x J + Jx = 0\}$, the unipotent subgroup of $\SO_{2c}$ and $U_{P'} \cong j_c U_P j_c$. The Weyl element $w_p$ can be identified with 
	$j_c
	\begin{bsmallmatrix}
		& I_c \\
		I_c & \\
	\end{bsmallmatrix}(-I_{2c})$. For sake of notational convenience we let $w = \begin{bsmallmatrix}
		& I_c \\
		I_c & \\
	\end{bsmallmatrix}$. The operator is defined on the whole plane by the meromorphic continuation. 
	
	We can define a second zeta integral on the image of $M(\chi \times \eta, s)f_{s}( h)$ from $P'$,  in which case the $\delta$ chosen in the case of $c= 2n+1$ is different. We shall use $\delta' = \mathfrak{p} \delta_0 \delta_1$ with 
	\begin{align*}
		&\mathfrak{p} = 
		\left\{j_c\begin{bmatrix}
			& & -I_n & & & \frac{1}{2} I_n \\
			& -1 & & & 0 & \\
			I_n & & & \frac{1}{2} I_n & &\\
			& & & & & I_n \\
			& & & & -1 & \\
			& & & -I_n & & 
		\end{bmatrix}j_c,1\right\} \in P,\\
		&\text{ proj}(\delta_0) = j_c
		\begin{bmatrix}
			& I_c \\
			I_c & 
		\end{bmatrix} 
		\text{diag}
		\left(\begin{bmatrix}
			I_n & & \\
			& & 2 \\
			& I_n &
		\end{bmatrix},
		\begin{bmatrix}
			& I_n &\\
			\frac{1}{2} & & \\
			& & I_n
		\end{bmatrix}\right),\\
		&\text{ proj}(\delta_1) = 
		\begin{bmatrix}
			I_{c} & A \\
			& I_{c}
		\end{bmatrix}
		\begin{bmatrix}
			I_n & & & & \\
			& -I_n & & & \\
			& & I_2 & & \\
			& & & -I_n & \\
			& & & & I_n
		\end{bmatrix}.
	\end{align*}
	By moving the Levi part of $\mathfrak{p}$ across, we can show 
	\begin{equation*}
		Z(v_1, \widehat{v}_2, M^*(\chi \times \eta, s)f_{s}) = \chi((-\varepsilon)^c)\eta((-\varepsilon)^c)Z'(v_1, \widehat{v}_2, M^*(\chi \times \eta, s)f_{s}),
	\end{equation*}
	where  
	\begin{equation*}
		\varepsilon = \begin{cases}
			1 & c = 2n+1 \\
			-1 & c = 2n.
		\end{cases}
	\end{equation*} 
	and $M^*(s, \chi \times \eta, \psi)f_{s}$ is $M(\chi \times \eta, s)f_{s}$ normalized by the factor $C(s,c,\chi \times \eta, \psi)$ defined in \cite[Section 3]{CFK}.
	Let \begin{equation*}
		\delta_\varepsilon = \begin{cases}
			\delta' & c = 2n+1 \\
			\delta & c = 2n.
		\end{cases}
	\end{equation*} 
	
	The two zeta integrals are related by the functional equation
	\begin{align*}
		\Gamma(s, \pi \times \chi, \psi) =& \frac{Z(v_1, \widehat{v}_2, M^*(\chi \times \eta, s)f_{s})}{Z(v_1, \widehat{v}_2, f_s)} \\
		=& \frac{\chi((-\varepsilon)^c)\eta((-\varepsilon)^c) Z'(v_1, \widehat{v}_2, M^*(\chi \times \eta, s)f_{s})}{Z'(v_1, \widehat{v}_2, f_s)}\\
		=& \chi((-\varepsilon)^c)\eta((-\varepsilon)^c)\gamma(s, \pi \times \chi, \psi) (\pi(i_G)\vartheta(s,c,\chi\otimes\eta,\psi))^{-1}  \numberthis \label{gamma}
	\end{align*} 
	for all $v_1 \in V_\pi$, $\widehat{v}_2  \in \widehat{V}_\pi$, and $f_{s} \in V(s, \chi \times \eta)$. The local $\gamma$-factor $\gamma(s, \pi \times\chi, \psi)$, the constant $i_G$, and the factor $\vartheta(s,c,\chi\otimes\eta,\psi)$ are as defined in \cite[Section 5.1]{CFK}. The function $	\Gamma(s, \pi \times \chi, \psi)$ depends on the choice of the measure $du$, made in the definition of $M(\chi \times \eta, s)$. The constants relating $\Gamma(s, \pi \times \chi, \psi)$ and $\gamma(s, \pi \times \chi, \psi)$ in \eqref{gamma} depend on the restriction of the central character of $\pi\vert_{C^0_G}$ and are otherwise independent of $\pi$. Hence, it is enough to show the stability of $\Gamma(s, \pi \times \chi, \psi)$.
	
	Let the conductors of $\chi$ and $\eta$ be $1+\mathcal{P}^{N_\chi}$ and $1+\mathcal{P}^{N_\eta}$ respectively and put $N_\tau = \max{(N_\chi, N_\eta)}$, $N_\tau \geq 1$ with
	\begin{equation*}
		n_\tau = \left\lfloor\frac{N_\tau +1}{2}\right\rfloor.
	\end{equation*}
	For $l \in \Z$, let 
	\begin{equation}
		\mathfrak{g}_l = \mathfrak{g}(\mathcal{P}_F^l) = \mathfrak{g}\cap \text{Mat}_c(\mathcal{P}_F^l). \label{3.9}
	\end{equation}
	We can now state our main result:
	\begin{theorem} \label{thm3.1}
		Let $\pi$ be an irreducible representation of $G$. There exists a positive integer $N$ such that, for any ramified character $\chi$ and $\eta = \pi\vert_{C^0_G}$ with conductors $1+\mathcal{P}^{N_\chi}$ and $1+\mathcal{P}^{N_\eta}$ respectively, and $N_\chi, N_\eta > N$,
		\begin{align*}
			\Gamma(s, \pi \times \chi, \psi)=& \, |2|^{c(s-1/2)}\chi((2\varepsilon)^c)\eta^{1/2}((2\varepsilon)^c)\chi((-\varepsilon)^c)\eta((-\varepsilon)^c) C(s,c,\chi \times \eta, \psi)\\ 
			&\times \int\limits_{\mathfrak{g}(2^{-1}q^{N_\tau-n_\tau-1})} \chi^{-1}(\det(I_c -v)) 
			\eta^{-1/2}(\det(I_c-v)) dv. 
		\end{align*} 
	\end{theorem}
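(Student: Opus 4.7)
The plan is to adapt the Rallis--Soudry strategy for orthogonal and symplectic groups to the general spin setting. I would begin by selecting a specific flat section $f_s\in V(s,\chi\times\eta)$ together with well-chosen test vectors $v_1,\widehat{v}_2$ that make the denominator $Z(v_1,\widehat{v}_2,f_s)$ essentially trivial to evaluate. Concretely, take a sufficiently small open compact subgroup $K\subset G$ stabilizing both $v_1$ and $\widehat{v}_2$, and let $f_s$ be the unique section supported on $P\cdot\delta\cdot i(1,\prescript{\iota}{}{K})$ with $f_s(\delta)=1$. Then the matrix coefficient $\langle v_1,\pi(g)\widehat{v}_2\rangle$ is constant on $K$ and $Z(v_1,\widehat{v}_2,f_s)$ reduces to $\langle v_1,\widehat{v}_2\rangle$ times the volume of $K/(K\cap C_G^0)$.

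Next I would attack the numerator $Z(v_1,\widehat{v}_2,M^\ast(\chi\times\eta,s)f_s)$ by unfolding the intertwining operator into a double integral over $G/C_G^0$ and $U_{P'}$. Parametrizing $U_{P'}$ by elements $u_x$ with $x\in\mathfrak{g}=\mathfrak{so}_c(F)$ and performing a Bruhat-type decomposition of $w_p^{-1}u_x\,\delta\,i(1,\prescript{\iota}{}{g})$, the section $f_s$ evaluates to a product of $\chi$ and $\eta$ applied to explicit entries arising from the decomposition, multiplied by an indicator of membership in $P\cdot\delta\cdot i(1,\prescript{\iota}{}{K})$. The strategy is to bring the $g$-integral and $x$-integral into the form of a character integral over a coset of $\mathfrak{g}_l$: for $N_\chi,N_\eta$ large enough, the highly oscillatory factor $(\chi\eta^{1/2})(\det(\cdot))$ annihilates the contribution from the exterior of a small ball by orthogonality of characters, localizing the whole integral near the identity of $U_{P'}$.

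On this small ball the Cayley-type transform $g\mapsto (I_c-v)(I_c+v)^{-1}$ linearizes the group integral, converting it into the Lie algebra integral $\int_{\mathfrak{g}(2^{-1}q^{N_\tau-n_\tau-1})}\chi^{-1}\eta^{-1/2}(\det(I_c-v))\,dv$ that appears in the statement. The bound $2^{-1}q^{N_\tau-n_\tau-1}$ for the radius comes from matching the conductor exponent of $\chi\eta^{1/2}$ against the valuation at which $\det(I-v)$ becomes a unit congruent to $1$ modulo $\mathcal{P}^{N_\tau}$, shifted by the factor of $\tfrac12$ introduced by the matrix $\mathfrak{p}$. The prefactors $|2|^{c(s-1/2)}\chi((2\varepsilon)^c)\eta^{1/2}((2\varepsilon)^c)$ are produced by the Levi-part contributions of $\mathfrak{p}$, $\delta_0$, and $\delta_1$ as one pushes them across $P$, together with the Jacobian of the Cayley transform; the factor $\chi((-\varepsilon)^c)\eta((-\varepsilon)^c)$ and the normalizer $C(s,c,\chi\times\eta,\psi)$ come from the relation between $Z$ and $Z'$ and from the definition of $M^\ast$ relative to $M$. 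Taking the ratio with the denominator $\langle v_1,\widehat{v}_2\rangle\cdot\mathrm{vol}(K/(K\cap C_G^0))$ and choosing $K$ compatibly with the radius of the ball eliminates the auxiliary volumes and yields the stated identity.

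The principal obstacle is the bookkeeping for the extra $\GL_1$ coordinate: every element of $H=\GS_{2c}$ is a pair $\{p_0,t\}$, and the character $\eta=\pi|_{C_G^0}$ is tied to this coordinate through the embedding $i$, so the Bruhat decomposition must be carried out in $\GS$ rather than $\SO$. This is precisely where the half-integral powers $\eta^{1/2}$ of the final formula emerge, and where the $\varepsilon$-signs must be tracked through the differing choices of $\delta$ and $\delta'$ between the even and odd cases. A secondary technical point is verifying that the character-orthogonality argument which localizes the $U_{P'}$-integral still functions after the $\GS$-modifications, i.e.\ that the oscillatory factor is genuinely $\chi\cdot\eta^{1/2}$ of a determinant and not some twisted variant that spoils the cancellation.
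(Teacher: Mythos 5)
Your proposal is correct in outline and follows essentially the same route as the paper's proof: the same choice of a section supported on the open orbit $P\cdot\delta\, i(1\times G)\delta^{-1}$ with a small compact open subgroup and $U$-fixed vectors making the denominator a volume, the same unfolding of the intertwining operator via the Bruhat decomposition and Cayley transform, the same localization of the unipotent integral by orthogonality against the highly ramified character $\chi\eta^{1/2}$ (the paper's shell-by-shell vanishing Lemmas via the Jacquet--Shalika trace argument), and the same accounting of the prefactors and of the $\eta^{1/2}$, $\zeta_g$-type subtleties coming from the extra $\mathrm{GL}_1$ coordinate. The points you flag as remaining technical issues are exactly the ones the paper resolves with its two Claims (lifting the matrix equation from $\mathrm{SO}_{2c}$ to $\mathrm{GSpin}_{2c}$ and showing $\Upsilon(\delta_\varepsilon i(1,{}^{\iota}g)\delta^{-1})$ is a square on the support), so your plan matches the published argument.
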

	As a corollary, we obtain the following result which implies Theorem \ref{thm:1.1} (See \cite[Theorem 2]{RS}).
	\begin{cor}
		Let $\pi$ be an irreducible representation of $G$. Then $\gamma(\pi,\chi\times\eta,s)$ is stable for all sufficiently ramified $\chi$, $\eta$. More precisely, there is a positive integer $N$, such that for all ramified quasi-characters $\chi$, $\eta$ of $F^\times$ with conductor having an exponent larger than $N$,
		\begin{equation*}
			\Gamma(s, \pi \times \chi, \psi) = 	M^*(\chi \times \eta,s)f_s(\delta_\varepsilon i(1,\varepsilon)).
		\end{equation*}
		for a certain choice of $f_s$.
	\end{cor}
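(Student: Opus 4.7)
The plan is to deduce the corollary directly from Theorem \ref{thm3.1}. The stability assertion is immediate upon inspecting the explicit formula for $\Gamma(s,\pi\times\chi,\psi)$ given there: every factor on the right-hand side depends on $\pi$ only through its restriction $\eta=\pi|_{C^0_G}$, since $c$, $\chi$, $\psi$, $\varepsilon$, and the normalizing constant $C(s,c,\chi\times\eta,\psi)$ are all independent of the internal structure of $\pi$. Thus if $\pi_1$ and $\pi_2$ agree on $C^0_G$, their $\Gamma$-factors coincide once $N_\chi,N_\eta$ exceed the threshold provided by Theorem \ref{thm3.1}, and the same follows for the $\gamma$-factors via the functional equation \eqref{gamma}, whose relating constants again depend on $\pi$ only through $\eta$.

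For the more precise assertion, I would construct a specific test section in $V(s,\chi\times\eta)$. Using the open Bruhat decomposition $H = P\, w_p\, U_{P'}$, take $f_s$ to be supported on $P\, w_p\, \delta_\varepsilon i(1,\varepsilon)\,\Omega$, where $\Omega$ is a compact-open neighborhood of the identity in $U_{P'}$ sized so that, under the standard parametrisation of $U_{P'}$ by the Lie algebra $\mathfrak{g}$ from \eqref{3.9}, it corresponds exactly to $\mathfrak{g}(2^{-1}q^{N_\tau-n_\tau-1})$. Extend $f_s$ by the usual left-equivariance under $(\chi\times\eta)\delta_P^{1/2}$ and normalise its value on the support.

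With this choice, the definition
\[
M^*(\chi\times\eta,s)f_s\bigl(\delta_\varepsilon i(1,\varepsilon)\bigr) \;=\; C(s,c,\chi\times\eta,\psi)\int_{U_{P'}}f_s\bigl(w_p^{-1}u\,\delta_\varepsilon i(1,\varepsilon)\bigr)\,du
\]
reduces, after the change of variables $u\leftrightarrow v\in\mathfrak{g}$, to an integral over $\mathfrak{g}(2^{-1}q^{N_\tau-n_\tau-1})$. Conjugating $\delta_\varepsilon i(1,\varepsilon)$ past $u$ and evaluating the section on the resulting element of $P$ produces the factor $\chi^{-1}(\det(I_c-v))\eta^{-1/2}(\det(I_c-v))$ inside the integrand, together with the scalar $|2|^{c(s-1/2)}\chi((2\varepsilon)^c)\eta^{1/2}((2\varepsilon)^c)\chi((-\varepsilon)^c)\eta((-\varepsilon)^c)$ arising from the Jacobian and from the action of $\mathfrak{p}$, $\delta_0$ and $\delta_1$. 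This matches the right-hand side of Theorem \ref{thm3.1}, yielding the desired equality.

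The hard part is the bookkeeping in that last step: tracking the characters $\chi$ and $\eta^{1/2}$ together with the modulus across the conjugation by $\delta_\varepsilon i(1,\varepsilon)$, whose shape depends on the parity of $c$. The factors of $\tfrac12$ appearing in the blocks of $\mathfrak{p}$ account for $|2|^{c(s-1/2)}$ and $\chi((2\varepsilon)^c)\eta^{1/2}((2\varepsilon)^c)$, while the remaining scalars come from $\delta_0\delta_1$; the cases $c=2n$ and $c=2n+1$ must be handled separately since their block structures differ. Once these matchings are verified, the corollary follows, and since the resulting expression depends on $\pi$ only through $\eta$, it implies the stability of $\gamma(s,\pi\times\chi,\psi)$ stated in Theorem \ref{thm:1.1}.
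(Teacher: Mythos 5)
Your first paragraph (stability of $\Gamma$, hence of $\gamma$, read off from the explicit formula of Theorem \ref{thm3.1} together with the fact that the constants in \eqref{gamma} depend on $\pi$ only through $\eta$) is fine, and is exactly how the paper passes from Theorem \ref{thm3.1} to Theorem \ref{thm:1.1}. The problem is the ``more precise'' identity, where your plan departs from the paper and leaves the core unproved. The paper does not build a new section: the ``certain choice of $f_s$'' is the section $f_1$ already used in the proof of Theorem \ref{thm3.1}, supported on the open orbit $P\cdot\delta i(1\times G)\delta^{-1}$ and given there by (an $\eta$-twist of) the characteristic function of a small compact open subgroup $U$. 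The identity $\Gamma(s,\pi\times\chi,\psi)=M^*(\chi\times\eta,s)f_1(\delta_\varepsilon i(1,\varepsilon))$ then falls out of \eqref{4.2}: Lemma 4.1 together with Lemmas 4.3--4.4 shows that $g\mapsto M^*(\chi\times\eta,s)f_1(\delta_\varepsilon i(1,{}^{\iota}g))$ is supported in $\varepsilon U$ and constant there, so the integral over $G/C^0_G$ collapses against the $U$-invariant vector $v_1$, with $\langle v_1,\widehat{v}_2\rangle=1$ and the factor $m(U_0)$ cancelling. Your proposal never uses, nor proves, this support-and-constancy statement, which is the actual content linking $\Gamma$ (an integral of a matrix coefficient against $M^*f_s$) to a single value of $M^*f_s$.

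Instead you introduce a section supported on $P\,w_p\,\delta_\varepsilon i(1,\varepsilon)\,\Omega$ and assert that $\int_{U_{P'}}f_s(w_p^{-1}u\,\delta_\varepsilon i(1,\varepsilon))\,du$ localizes to $\Omega\simeq\mathfrak{g}(2^{-1}q^{N_\tau-n_\tau-1})$ and reproduces the right-hand side of Theorem \ref{thm3.1}. That is precisely the hard part, and it is not mere bookkeeping: writing $d=\delta_\varepsilon i(1,\varepsilon)$, the support condition is $u\in\bigl(w_pPw_p\,d\,\Omega\,d^{-1}\bigr)\cap U_{P'}$, and since conjugation by $d$ (which contains the Weyl part of $\delta_0$) moves $U_{P'}$ off itself, identifying this set and the $P$-component of $w_p^{-1}ud$ amounts to solving an equation of the type \eqref{wu} and running the Cayley-transform analysis of Lemma 4.1 --- i.e.\ redoing the paper's computation rather than invoking Theorem \ref{thm3.1}. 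Moreover, in the $\GS$ setting the value of a section on the $P$-part is not determined by the $\SO_{2c}$-data alone: one must fix the $\GL_1$-coordinate $t$ of the Levi, and the well-definedness of the resulting factor $\eta^{1/2}(\cdot)$ is exactly what Claim \ref{claim2}, the $\zeta_g$ device and the normalization $t=(\det(E)\zeta_g)^{1/2}$ are for; your sketch does not address this, nor the compatibility of your normalization of $f_s$ and $\Omega$ with the measure $du$ fixed in the definition of $M(\chi\times\eta,s)$, on which $\Gamma$ genuinely depends. Without these ingredients the claimed match with Theorem \ref{thm3.1} is an assertion, not a proof; the short correct route is the paper's: take $f_s=f_1$ from the proof of Theorem \ref{thm3.1} and collapse \eqref{4.2} using the established support and constancy of $M^*f_1(\delta_\varepsilon i(1,{}^{\iota}g))$.
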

	
	\section{Proof of Theorem 3.1}
	Choose $f = f_s \in V(s, \chi \times \eta)$ such that it is supported in the open orbit $P \cdot \delta i(G \times G) \delta^{-1} = P \cdot \delta i(1 \times G) \delta^{-1}$, 
	and so that the restriction $f\vert_{\delta i(1\times G) \delta^{-1}}$, regarded as a function of $G$, is the characteristic function of a compact open subgroup modulo $C_G^0$. 
	In more detail, identify a compact open subgroup with a subgroup $U_0$ of $\SO_c$ and let $U$ be subgroup of $G$ whose quotient modulo $C^0_G$ is identified with $U_0$. 
	Then, $U \cong C^0_G U_0$ and for $g = (z,u) \in U$, define $\phi_U(g) \coloneqq \eta(z)\phi_{U_0}(u)$. 
	This is well defined for $U_0$ small enough as $\eta(u)=1$ for $u \in C_G^0 \cap U_0$. 
	Choose $f \in V(s, \chi \times \eta)$ such that $f\vert_{\delta i(1\times G) \delta^{-1}}$, regarded as a function of $G$, vanishes outside $U$ and $f(\delta i(1, \prescript{\iota}{}u) \delta^{-1}) = \phi_U(u)$. Note, we are using $C_G^0 \cong C_H^0$ to define $\eta(z)$ for $z \in C^0_G$, with abuse of notation. Also, for a compact open subgroup U, $\prescript{\iota}{} g \in U \iff g \in U$.
	Define $f_1 = \delta^{-1}\cdot f$, hence $f_1(h) = f(h \delta^{-1})$. 
	
	We assume that $U$ is small enough so that $V_\pi ^U \neq 0$ and let $v_1 \in V_\pi ^U$ be a non-zero element. Choose $\widehat{v}_2 \in \widehat{V}_\pi$ such that $\langle v_1, \widehat{v}_2 \rangle$ = 1. Then the local zeta integral $Z(v_1, \widehat{v}_2, f_1)$ converges for all $s$,  
	\begin{align*}
		Z(v_1, \widehat{v}_2, f_1) &= \int_{G/C_G^0} \langle v_1, \pi(g) \widehat{v}_2 \rangle f_1(\delta i(1,\prescript{\iota}{}g) ) dg\\
		& = \int_{G/C_G^0}\langle  v_1, \pi(g)\widehat{v}_2 \rangle f(\delta i(1,\prescript{\iota}{}g) \delta^{-1} ) dg\\
		&= \int_{U/C_G^0}\langle v_1, \pi(u) \widehat{v}_2 \rangle f(\delta i(1,\prescript{\iota}{}u) \delta^{-1} ) du\\
		& = \int_{U_0}\langle v_1, \widehat{v}_2 \rangle \phi_{U_0}(u) du\\
		& = m(U_0)\langle v_1, \widehat{v}_2 \rangle\\
		& = m(U_0), 
	\end{align*}
	where $m(U_0)$ is the measure of $U_0$. Hence, from the functional equation, we get 
	\begin{equation}
		\Gamma(s, \pi \times \chi, \psi) = \frac{1}{m(U_0)}\int_{G/C_G^0} \langle v_1, \pi(g)\widehat{v}_2 \rangle M^*(\chi \times \eta,s)f_1(\delta_\varepsilon i(1,\prescript{\iota}{}g))dg \label{4.2}
	\end{equation} for $\Re(s) \ll 0$. By meromorphic continuation, this equality is valid for all $s$. Let $1_G$ denote the identity in $G$. We will now show that $M(\chi \times \eta,s)f_1(\delta_\varepsilon i(1_G,\prescript{\iota}{}g))$ is supported in $i(1_G \times \varepsilon U)$ where it is a constant. We will use this to prove Theorem \ref{thm3.1}.
	
	\begin{lemma}
		For $f_1$ as chosen above, we have, for $\Re(s) \gg 0$,
		\begin{align*}
			M(\chi \times \eta,s)f_1(\delta_\varepsilon i(1_G,\prescript{\iota}{}g)) = |2|^{c/2(c-1)}&\chi(\varepsilon^c)\eta^{1/2}(\varepsilon^c)\chi((-\varepsilon)^c)\eta((-\varepsilon)^c)\\ \times  \int\limits_{\substack{h \in \SO_c \\ \det(I_c + h) \neq 0}} &\chi(\det(I_c+h))\eta^{1/2}(\det(I_c+h)\zeta_g)|\det(I_c+h)|^{s-c/2}\phi_{U_0}(\varepsilon hg_0)dh, \numberthis \label{Lem4.1}
		\end{align*}
		where $dh$ is an explicit Haar measure on $\SO_c$ and 
		\begin{align*}
			\zeta_g =
			\begin{cases}
				1 \quad \text{$\Upsilon(\delta_\varepsilon i(1,\inv g) \delta^{-1})$ is a square},\\
				\zeta \quad \text{otherwise},
			\end{cases}
		\end{align*} for a chosen non-square element $\zeta \in F$. 
	\end{lemma}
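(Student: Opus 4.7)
The plan is to unfold the defining integral for the intertwining operator, carry out a Bruhat-style decomposition of $w_p^{-1} u \delta_\varepsilon$ inside $H$, perform a Cayley-type change of variables from $U_{P'}$ onto an open dense subset of $\SO_c$, and finally read off the left $P$-equivariance of $f_1$ together with the support condition built into $\phi_U$ to assemble the integrand on the right-hand side of~\eqref{Lem4.1}.

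First, by definition and the relation $f_1(h) = f(h\delta^{-1})$,
\[
M(\chi \times \eta,s)f_1(\delta_\varepsilon i(1_G,{}^\iota g))
= \int_{U_{P'}} f\!\left(w_p^{-1} u\, \delta_\varepsilon\, i(1_G,{}^\iota g)\, \delta^{-1}\right)du.
\]
Since $f$ is supported in $P\cdot \delta\, i(1\times G)\,\delta^{-1}$, only those $u$ for which $w_p^{-1} u \delta_\varepsilon$ admits a factorisation $p(u)\cdot \delta\, i(1, {}^\iota h(u))$ with $p(u)\in P$ and $h(u)\in \SO_c$ will contribute. I would make this decomposition explicit on $U_{P'}\cong j_c U_P j_c$ using the matrices for $\mathfrak{p}$, $\delta_0$ and $\delta_1$ from Section~3.2: parametrising $u$ by a block $X\in \mathfrak{so}_c$ (cf.~\eqref{2.3}), a direct matrix computation in $\SO_{2c}$ lifted to $\GS_{2c}$ produces a Cayley map $u\mapsto h(u)$ whose image is the open subset $\{h\in \SO_c : \det(I_c + h)\neq 0\}$, with a Levi part of $p(u)$ of the form $\operatorname{diag}(a(h), J\,{}^t a(h)^{-1} J)$ satisfying $\det a(h) = \pm \det(I_c+h)$ up to powers of $2$. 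Its Jacobian evaluates to $|2|^{c(c-1)/2}\,|\det(I_c+h)|^{-c}$, with the exponent $c(c-1)/2 = \dim \mathfrak{so}_c$ absorbing the factors of $1/2$ built into $\mathfrak{p}$ and $\delta_1$.

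Second, evaluating $f$ at $p(u)\cdot \delta\, i(1,{}^\iota h(u)g)\,\delta^{-1}$ and invoking the transformation law
\[
f(\{p_0,t\}\cdot x) = (\chi\circ\det)(p_0)\,\eta(t)\,|\det p_0|^{s-1/2}\,\delta_P^{1/2}(\{p_0,t\})\,f(x)
\]
produces the main integrand. The $\GL_c$-part contributes $\chi(\det(I_c+h))|\det(I_c+h)|^{s-c/2}$ once $\delta_P^{1/2}$ is absorbed, while the $\GL_1$-part of the Levi, determined by the GSpin cocycle through $\mathfrak{r}_c^\vee$, supplies $\eta^{1/2}(\det(I_c+h)\,\zeta_g)$: the exponent $1/2$ appears because the $\GL_1$-factor of $M_P$ pulls back to $\epsilon_0^\vee$, whereas $\eta$ is evaluated along $\mathfrak{r}_c^\vee$, and the two cocharacters differ by a factor of two; the sign ambiguity intrinsic to this square root is tracked precisely by $\zeta_g\in\{1,\zeta\}$, which by construction records whether $\Upsilon(\delta_\varepsilon i(1,{}^\iota g)\delta^{-1})$ is a square. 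Writing $f(\delta\, i(1,{}^\iota h g)\,\delta^{-1}) = \phi_U(hg) = \phi_{U_0}(\varepsilon h g_0)$ yields the last factor, the extra $\varepsilon$ reflecting the mismatch between $\delta$ and $\delta_\varepsilon$ in the odd case.

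The main obstacle will be the second step: pinning down the $\GL_1$-component of $p(u)$ inside $\GS_{2c}$ and recognising it as $\det(I_c+h)^{1/2}\zeta_g$ modulo squares. In contrast to the orthogonal/symplectic situation treated in~\cite{RS}, the Levi of $P$ here is $\GL_c\times \GL_1$, and its $\GL_1$-factor is governed by the non-trivial cocycle furnished by~\eqref{SES}; the formula for $\mathfrak{r}_c^\vee$ recalled in Section~2.2 must be invoked to convert $\det a(h)$ into a $\GL_1$-datum on which $\eta$ is evaluated. Once this is settled, the remaining constants $\chi(\varepsilon^c)$, $\eta^{1/2}(\varepsilon^c)$ and $\chi((-\varepsilon)^c)\eta((-\varepsilon)^c)$ drop out as boundary contributions from the factorisations of $\delta_\varepsilon$ and~$\delta$. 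Absolute convergence of the integral for $\Re(s)\gg 0$ then follows from the compactness of the support of $\phi_{U_0}$, after which all constants assemble into the claimed expression.
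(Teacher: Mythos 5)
Your plan follows the paper's own route: unfold the intertwining integral, use the support of $f$ to reduce to those $u$ for which $w\,u\,j_c^\#=p\cdot\delta i(1,\inv h)\delta_\varepsilon^{-1}$ is solvable, solve this explicitly (which is exactly a Cayley transform, $h_0=\varepsilon\mathfrak{c}(2x)$ with $\GL_c$-part $E=2\varepsilon(I+2x)^{-1}$), and then invoke the $P$-equivariance of $f$ and a change of variables onto $\{h\in\SO_c:\det(I_c+h)\neq0\}$. The genuine gap is precisely the step you defer as ``the main obstacle'': you never supply the mechanism that produces $\eta^{1/2}(\det(I_c+h)\zeta_g)$, and the heuristic you offer (that $\epsilon_0^\vee$ and $\mathfrak{r}_c^\vee$ ``differ by a factor of two'') is not what does the work. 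The point is that the Bruhat equation only determines the $\SO_{2c}$-data $(p_0,h_0)$ uniquely; the $\GS_{2c}$-lift $(p,h)$ is determined only up to $z\in C^0_G$ acting by $(p,h)\mapsto(zp,z^{-1}h)$. One must therefore (i) check that the integrand is insensitive to this ambiguity, which uses the specific way $\phi_U$ was extended from $\phi_{U_0}$ by $\eta$ so that $\eta(z)\eta(z^{-1})=1$, and (ii) pin down the $\GL_1$-coordinate $t$ of $p$ by applying the character $\Upsilon$ to both sides of the equation: since $\Upsilon(w u j_c^\#)=1$ and $\Upsilon(p)=\det(E)t^{-2}$, after normalizing the choice of $h$ so that $\Upsilon(\delta i(1,\inv h)\delta_\varepsilon^{-1})=\zeta_g$ (possible because $\zeta_g$ records exactly the relevant square class) one obtains $t=(\det(E)\zeta_g)^{1/2}$, well defined up to a sign that is absorbed by $\{I,-1\}\in C^0_G$; only then does $\eta(t)=\eta^{1/2}(\det(I_c+h)\zeta_g)$ make sense. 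Without this bookkeeping the square root in \eqref{Lem4.1} is not even well defined, so this is a missing idea rather than a detail to be checked later.

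Two concrete corrections as well. The composite Jacobian from the unipotent coordinate $x$ to $h$ is $|2|^{c(c-1)/2}\,|\det(I_c+h)|^{-(c-1)}$ (the Cayley-measure lemma of Rallis--Soudry, exponent $c-1$), not $|\det(I_c+h)|^{-c}$: since the inducing character together with $\delta_P^{1/2}$ contributes $|\det E|^{s+c/2-1}=|\det(I_c+h)|^{s+c/2-1}$, your exponent would give a final power $|\det(I_c+h)|^{s-c/2-1}$, contradicting the formula you are proving, whereas $c-1$ gives the stated $s-c/2$. Also, the $\varepsilon$ in $\phi_{U_0}(\varepsilon hg_0)$ comes from $h_0=\varepsilon\mathfrak{c}(2x)$, so it is nontrivial in the even case (where $\varepsilon=-1$), not a reflection of the $\delta$ versus $\delta_\varepsilon$ discrepancy in the odd case (where $\varepsilon=1$). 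Finally, your phrase ``lifted to $\GS_{2c}$'' silently uses the fact that solvability in $\SO_{2c}$ is equivalent to solvability in $\GS_{2c}$; this does need to be stated (it is the paper's first claim), though it is easy.
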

	We shall later show that $\zeta_g = 1$ for all relevant $g$.
	\begin{proof}
		Let $\Re(s) \gg 0$, so that the integral converges absolutely. By the definition of the intertwining operator, we have 
		\begin{align*}
			M(\chi\times \eta,s)f_1(\delta i(1_G, \prescript{\iota}{}g)) &= \int_{U_{P'}} f_1((-1_{2c})w_P^{-1}u \delta_\varepsilon i(1,\inv g)) du \\
			& = \int_{U_P} f((-1_{2c})wu j_c^\# \delta_\varepsilon i(1,\inv g)\delta^{-1}) du
		\end{align*}
		where $j_c^\#$ is the extension of $j_c$ to $\GS_{2c}$ and $-1_{2c} \in \GS_{2c}$ is the element $\{-I_{2c},1\}$. Note, $j_c$ and $j_c^\#$ are trivial if $c = 2n$.
		By our choice of $f$, the value is non-zero only if 
		\begin{equation*}
			w \cdot u \cdot j_c^\# \cdot \delta_\varepsilon i(1, \inv g) \delta^{-1} \in P \cdot \delta i(1_G \times G) \delta^{-1}
		\end{equation*} 
		i.e.,
		\begin{equation*}
			w \cdot u \cdot j_c^\# \in P \cdot \delta i(1_G \times G) \delta\varepsilon^{-1}. 
		\end{equation*}
		More explicitly, there exists $p \in P$ and $h \in G$ such that 
		\begin{equation}
			w \cdot u \cdot j_c^\# = p \cdot \delta i(1,\inv h) \delta_\varepsilon^{-1}  \label{wu}
		\end{equation}
		Let $w_0, u_0, h_0$ and $p_0$ denote the images of $w,u,h$ and $p$ respectively under the map \eqref{SES}. Let $\delta_0$ and $\delta_{\varepsilon,0}$ henceforth denote the projections of $\delta$ and $\delta_\varepsilon$ respectively. (We shall no longer be using $\delta_0$ as previously defined in Section 3.2, so there should be no confusion.)
		\begin{claim}
			$ w_0 \cdot u_0 \cdot j_c = p_0 \cdot \delta_0 i_0(1, \inv h_0) \delta_{\varepsilon,0}^{-1}$ has a solution in $\SO_{2c}$ if and only if  
			$w\cdot u \cdot j_c^\# = p \cdot \delta i(1, \inv h) \delta_\varepsilon^{-1}$ respectively has a solution in $\GS_{2c}$.  \label{claim1}
		\end{claim}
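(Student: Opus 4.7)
The plan is to use the short exact sequence \eqref{SES}, extended from $\mathrm{GSpin}_c$ to $H = \GS_{2c}$,
\begin{equation*}
1 \to \GL_1 \to H \xrightarrow{\text{proj}} \SO_{2c} \to 1,
\end{equation*}
whose kernel $\GL_1 = C_H^0$ is central and therefore lies inside the maximal parabolic $P$ (being contained in its Levi $\GL_c \times \GL_1$). I would handle the two implications separately: pushing the $H$-equation down to $\SO_{2c}$ by applying $\text{proj}$ in one direction, and lifting an $\SO_{2c}$-solution to $H$ by an explicit formula in the other.

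For the forward direction, assume the $H$-equation $w u j_c^\# = p\, \delta\, i(1,\inv h)\, \delta_\varepsilon^{-1}$ holds. Since $\text{proj}$ is a group homomorphism and $w_0, u_0, j_c, \delta_0, \delta_{\varepsilon,0}, h_0, p_0$ are by definition the images of $w, u, j_c^\#, \delta, \delta_\varepsilon, h, p$, the claimed $\SO_{2c}$-equation drops out immediately.

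For the reverse direction, suppose the $\SO_{2c}$-equation is solvable with some $p_0 \in P_0$ and $h_0 \in \SO_c$. Using surjectivity of $\text{proj} : G \to \SO_c$, I pick any lift $h \in G$ of $h_0$; compatibility of the outer involution with the projection (inherited from the construction in \cite[Section 3.4]{CFK}) gives $\text{proj}(\inv h) = \inv h_0$. Then I \emph{define}
\begin{equation*}
p := (w u j_c^\#) \, \delta_\varepsilon \, i(1,\inv h)^{-1} \, \delta^{-1} \in H,
\end{equation*}
so that the $H$-equation $w u j_c^\# = p\,\delta\, i(1,\inv h)\,\delta_\varepsilon^{-1}$ holds by construction. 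It remains to verify $p \in P$. Computing $\text{proj}(p)$ and substituting the assumed $\SO_{2c}$-equation yields $\text{proj}(p) = p_0 \in P_0$, and since $\ker(\text{proj}) = \GL_1 \subset P$, the preimage of $P_0$ under $\text{proj}$ is exactly $P$, whence $p \in P$.

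The argument is short, and the only point requiring care — rather than a genuine obstacle — is the compatibility $\text{proj}(\inv h) = \inv(\text{proj}\, h)$ of the outer involution with the projection, which follows from the way $\iota$ is extended from $\SO_c$ to $G$ in \cite{CFK}. Once this is granted, the equivalence reduces to the surjectivity of $\text{proj}$ together with the containment of its central kernel inside $P$.
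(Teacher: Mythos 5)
Your proposal is correct and follows essentially the same route as the paper: the forward implication is just an application of $\mathrm{proj}$, and the converse is obtained by lifting along the short exact sequence \eqref{SES}, with the central $\GL_1$ kernel absorbed into $P$ (the paper phrases this as the lift being ``unique up to an element of the center,'' citing the construction of \cite{CFK}). Your explicit definition of $p$ from the lifted $h$ and the check $\mathrm{proj}^{-1}(P_0)=P$ is simply a fleshed-out version of that one-line argument, relying on the same compatibilities of $i$, $\iota$ and the projections.
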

		\begin{proof}
			Solution in $\GS_{2c}$ clearly implies a solution in $\SO_{2c}$. To show the converse, we extend a solution in $\SO_{2c}$ to a solution in $\GS_{2c}$ according to the construction of \cite{CFK}. This extension is up to the choice of preimage and is unique up to an element of the center. 
		\end{proof}
		
		Writing $u_0 = \begin{bmatrix}
			I & x \\
			& I
		\end{bmatrix}$ for $x \in \text{Mat}_c(F)$ such that $\prescript{\mathrm t}{}x J + Jx = 0$ and $I = I_c$, the condition in Claim \ref{claim1} is equivalent to solving for such an $x$ in 
		\begin{equation*}
			\begin{bmatrix}
				& I\\
				I & \\
			\end{bmatrix}
			\begin{bmatrix}
				I & x \\
				& I
			\end{bmatrix} =
			\begin{bmatrix}
				E & Y \\
				& E^*
			\end{bmatrix}
			\begin{bmatrix}
				\frac{1}{2}(h_0-\varepsilon I) & \frac{1}{4} (h_0+\varepsilon I) \\
				(h_0 +\varepsilon I) & \frac{1}{2} (h_0 -\varepsilon I)
			\end{bmatrix}.
		\end{equation*}
		Here $E^* = J \prescript{\mathrm t}{}{E^{-1}} J$, and $Y$ is such that $\begin{bmatrix}
			E & Y \\
			& E^*
		\end{bmatrix} \in P_0$. A solution for this system exists if and only if 
		\begin{equation*}
			\det (I - 2 x) \neq 0,
		\end{equation*}
		in which case, we have 
		\begin{equation}
			h_0 =\varepsilon \frac{I +2x}{I -2x} \label{4.8}
		\end{equation}
		and 
		\begin{equation}
			E = (2\varepsilon)(I+2x)^{-1}.
		\end{equation}
		Note that since $-x = J\prescript{t}{}{x}J$,
		\begin{equation}
			\det(I+2x) = \det(I-2x),
		\end{equation}
		and hence, $\det(h_0) = 1$.
		
		Consider the Cayley transform for elements of $\SO_c$, as in \cite{RS},
		$$\mathfrak{c} : \mathfrak{g}' \rightarrow G^{0'}  $$
		$$\mathfrak{c}(y) = \frac{I+y}{I-y}$$
		where $$\mathfrak{g}' = \{y \in \mathfrak{so}_c| \det(I-y) \neq 0 \},$$
		$$G^{0'}  = \{t \in \SO_c| \det(I+t) \neq 0\}.$$
		Hence, \eqref{4.8} implies 
		\begin{equation}
			h_0 = \varepsilon\mathfrak{c}(2x)
		\end{equation}
		and 
		\begin{equation}
			\{\prescript{\mathrm t}{}x J + Jx = 0 \text{ and } \det (I - 2 x) \neq 0 \} \Leftrightarrow x \in \mathfrak{g}', \label{4.13}
		\end{equation}
		by \eqref{2.3}.
		
		Note, the uniqueness of $p_0$ and $h_0$ only determines $p$ and $h$ in \eqref{wu} up to an element of the center. Using the notation as in Section 3.2, we denote 
		$$p = \left\{\begin{bmatrix}
			E & Y\\
			& E^*
		\end{bmatrix}, t\right\}.$$ To make a canonical choice of $p$ and $h$ we claim the following:
		\begin{claim}
			If $g\in U$, or if $-g\in U$ and $c =2n$, then $\Upsilon(\delta_\varepsilon i(1,\inv g) \delta^{-1})$ is a square. \label{claim2}
		\end{claim}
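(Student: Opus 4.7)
The strategy is to exploit the conjugation-invariance of the character $\Upsilon$ (valid because $F^\times$ is abelian) to write
\begin{equation*}
\Upsilon(\delta_\varepsilon i(1, \inv g) \delta^{-1}) = \Upsilon(\delta_\varepsilon \delta^{-1}) \cdot \Upsilon(\delta i(1, \inv g) \delta^{-1}) = \Upsilon(\delta_\varepsilon \delta^{-1}) \cdot \Upsilon(i(1, \inv g)),
\end{equation*}
reducing the claim to showing that each of the two factors lies in $(F^\times)^2$.

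For the first factor, when $c = 2n$ we have $\delta_\varepsilon = \delta$ and the factor equals $1$, which is a square. When $c = 2n+1$, I would compute $\delta'\delta^{-1}$ explicitly from the matrix formulas for $\delta$ and $\delta'$ recorded in Section 3.2: contributions from the unipotent radical factors $\delta_1$ and from the $U_P$-component of $\mathfrak{p}$ drop out, since $\Upsilon$ factors through the Levi of $P$. What remains is a torus/Weyl contribution whose lift from $\SO_{2c}$ to $\GS_{2c}$ is pinned down by the Tits-style representatives from \cite{CFK}; its $\Upsilon$-value is then read off from the coroot expression for $\mathfrak{r}_{2c}$ and verified to be a square.

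For the second factor, the embedding $i$ of Section 3.1 identifies the first coordinate of the right copy of $G$ with $\epsilon_0^H$, so $\Upsilon \circ i(1, \cdot)$ agrees (up to precomposition with the automorphism $\iota$) with the character $\Upsilon^G$ of $G$. This is a continuous homomorphism $G \to F^\times$. Since $F$ is non-archimedean, $(F^\times)^2$ is an open subgroup of $F^\times$, so its preimage under $\Upsilon^G \circ \iota$ is an open subgroup $U' \subseteq G$. Because $U$ was only required at the start of the proof to be small enough that $V_\pi^U \neq 0$, we may further demand $U \subseteq U'$ without disturbing anything. Then $\Upsilon(i(1, \inv g)) \in (F^\times)^2$ for every $g \in U$. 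For the auxiliary case ($c = 2n$, $-g \in U$), I would write $i(1, \inv{(-g)}) = i(1, \inv{(-1)}) \cdot i(1, \inv g)$; the second factor is again in $(F^\times)^2$ by the same continuity argument (applied to the translate $-U$), reducing the problem to verifying that $\Upsilon(i(1, \inv{(-1)}))$ is a square. For even $c$, the element $-1 \in G$ is a lift of $-I_c \in \SO_c$, and its image under $i(1, \cdot)$ projects to an $\SO_{2c}$-element whose canonical lift can be written in terms of the coroots $\epsilon_i^\vee(-1)$; direct evaluation should yield $\Upsilon(i(1, \inv{(-1)})) = 1$.

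The principal obstacle is the bookkeeping in the explicit computations: both the $c = 2n+1$ evaluation of $\Upsilon(\delta'\delta^{-1})$ and the evaluation of $\Upsilon(i(1, \inv{(-1)}))$ hinge on pinning down canonical lifts of $\SO$-elements to $\GS$ through the short exact sequence \eqref{SES}, a process well-defined only modulo the center and requiring consistent choices throughout. The restriction to $c = 2n$ in the second half of the statement is presumably forced precisely because for odd $c$ the element $-I_c$ does not even lie in $\SO_c$, so no analogous lift with $\Upsilon$-value a square is available.
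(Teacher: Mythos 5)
Your opening reduction is legitimate: $\Upsilon$ is a character of $H=\GS_{2c}$ (the paper itself relies on its multiplicativity and on its triviality on unipotents), so indeed $\Upsilon(\delta_\varepsilon i(1,\inv g)\delta^{-1})=\Upsilon(\delta_\varepsilon\delta^{-1})\,\Upsilon(i(1,\inv g))$, and this is a different route from the paper, which never isolates $\Upsilon(\delta_\varepsilon\delta^{-1})$: it instead applies an Iwahori-type factorization to $\delta_\varepsilon i(1,U)\delta^{-1}$, kills the $U_P^-$ part because $\Upsilon$ is a character, and evaluates the $P$-part by $\Upsilon(u)=\det(E_u)t_u^{-2}$ with $\det(E_u)\in 1+\mathcal{P}^N$, a square once $N$ exceeds a constant $c_F$; the case $-g\in U$ then only contributes $(-1)^c$. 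The problem with your version is that the two statements carrying the real content are deferred rather than proved: that $\Upsilon(\delta'\delta^{-1})$ is a square when $c=2n+1$, and that $\Upsilon(i(1,\inv{(-1)}))$ is a square when $c=2n$ (``verified to be a square'', ``should yield''). These are exactly the computations the paper's decomposition is designed to avoid, and they are not routine: $\delta_0,\delta_1$ are prescribed only through their $\SO_{2c}$-projections, so every lift is ambiguous by the kernel $\GL_1$ of \eqref{SES}, and one must track $\Upsilon$ modulo squares through a product of Weyl, torus and unipotent pieces (the ambiguity itself is harmless only because $\Upsilon$ on that kernel is $t\mapsto t^{-2}$, which also needs to be stated). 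Without these evaluations the argument is incomplete precisely where the claim has content.

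There is a second, structural gap: your shrinking step treats $U$ as a compact open subgroup that can be made arbitrarily small, but in this proof $U\cong C^0_G U_0$ contains the non-compact connected center, so $U$ can never be contained in your open subgroup $U'$; only $U_0$ can be shrunk. To repair this you must check separately that $\Upsilon(i(1,z))\in (F^\times)^2$ for all $z\in C^0_G$ --- which is true, since $i(1,z^{-1})=i(z,1)\in C^0_H$ and $\Upsilon\vert_{C^0_H}$ is $t\mapsto t^{-2}$ --- and then run the continuity argument on $U_0$ alone (shrinking $U_0$ is indeed harmless for $V_\pi^U\neq 0$ and the later conditions on $N$). With that correction and with the two deferred evaluations actually carried out, your factorization would give a valid, if more computational, proof; as it stands, the paper's argument handles all $g\in U$, and $-g\in U$ for even $c$, uniformly and without ever computing $\Upsilon(\delta'\delta^{-1})$ or $\Upsilon(i(1,\inv{(-1)}))$.
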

		\begin{proof}
			By the Iwahori factorization in $G$, 
			\begin{equation*}
				\delta_\varepsilon i(1, U) \delta^{-1}  = (\delta_\varepsilon i(1,U) \delta_\varepsilon^{-1}\cap U_P^-) (\delta_\varepsilon i(1,U) \delta^{-1} \cap P)
			\end{equation*} 
			and hence,
			\begin{equation*}
				\delta_{\varepsilon,0} i(1, U_0) \delta_0^{-1} = (\delta_{\varepsilon,0} i(1, U_0) \delta_{\varepsilon,0}^{-1} \cap U_P^-) \left(\delta_{\varepsilon,0} i(1, U_0) \delta_0^{-1} \cap \begin{bsmallmatrix}
					E_u & Y_u \\
					& E_u^*
				\end{bsmallmatrix}\right)
			\end{equation*}
			as unipotents of $\SO_{2c}$ and $\GS_{2c}$ are canonically isomorphic. Now,
			\begin{equation*}\delta_{\varepsilon,0} i(1, U_0) \delta_0^{-1} \cap \begin{bmatrix}
					E_u & Y_u \\
					& E_u^*
				\end{bmatrix} = \begin{bmatrix}
					I_c + \text{Mat}_c(\mathcal{P}^N) & \text{Mat}_c(\mathcal{P}^N)\\
					& I_c + \text{Mat}_c(\mathcal{P}^N)
				\end{bmatrix}
			\end{equation*}
			and hence $\det (E_u) \in 1 + \mathcal{P}^N$. Note,  
			\begin{equation}
				\Upsilon\vert_{\delta_\varepsilon i(1,U) \delta_\varepsilon^{-1}\cap U_P^-} = 1 \label{4.17}
			\end{equation}
			as it is a character. Denoting an element $u \in \delta_\varepsilon i(1,U) \delta^{-1} \cap P$ by $$u = \left\{\begin{bmatrix}
				E_u & Y_u\\
				& E_u^*
			\end{bmatrix}, t_u\right\},$$ we get, by \eqref{4.17} and the definition of $\Upsilon$ (c.f. \cite[Section 3]{CFK}) 
			\begin{equation}
				\Upsilon(u) = \det (E_u) t_u ^{-2}. \label{4.10}
			\end{equation}
			which is a square for any $N > c_F$, a constant depending only on $F$. Hence, if $g \in U$ then $\Upsilon(\delta_\varepsilon i(1,\inv g) \delta^{-1}) = \det(E_g)t_g^{-2}$ is a square. If $-g \in U$ then $\Upsilon(\delta_\varepsilon i(1,\inv g) \delta^{-1}) = \det(-E_{-g})t_{-g}^{-2} = (-1)^c \det(E_{-g})t_{-g}^{-2}$ all of which are squares for even $c$. 
		\end{proof}
		
		Now, by \eqref{wu}
		\begin{equation*}
			f((-1_{2c})wuj_c^\# \cdot \delta_\varepsilon i(1,\inv g) \delta^{-1}) = f((-1_{2c})p \cdot \delta i(1,\inv (hg)) \delta^{-1}) = (\chi \times \eta) (-1_{2c}p) f(\delta i(1,\inv (hg)) \delta^{-1})
		\end{equation*} is non-zero only if $hg \in U$. Hence, 
		\begin{equation*}
			\Upsilon(\delta i(1,\inv h) \delta_\varepsilon^{-1})\Upsilon(\delta_\varepsilon i(1,\inv g) \delta^{-1}) = \Upsilon(\delta i(1,\inv (hg)) \delta^{-1}) =  \det(E_{hg})t_{hg}^{-2},
		\end{equation*} where $E_{hg}$ and $t_{hg}$ are defined similarly to those in proof of Claim \ref{claim2}. 
		Therefore, \begin{equation}
			\Upsilon(\delta i(1,\inv h) \delta_\varepsilon^{-1}) = \Upsilon(\delta_\varepsilon i(1,\inv g) \delta^{-1})^{-1}\det(E_{hg})t_{hg}^{-2}. \label{h}
		\end{equation} 
		Note that changing $h \mapsto zh$ for any element $z = \{I_c,t_z\}$ of $C^0_G$ changes $\Upsilon(h) \mapsto \Upsilon(h)t_z^{-2}$. 
		
		The term $\det(E_{hg})t_{hg}^{-2}$ is a square by reasons of \eqref{4.10}. Therefore, \eqref{h} implies $\Upsilon(\delta i(1,\inv h) \delta_\varepsilon^{-1})$ is a square if we can further show that $\Upsilon(\delta_\varepsilon i(1,\inv g) \delta^{-1})$ is a square, for which it suffices to show either $g$ belongs to $U$ or both $-g$ belongs to $U$ and $c =2n$. If $\Upsilon(\delta_\varepsilon i(1,\inv g) \delta^{-1})$ is a square, we make the canonical choice of $h$ over $h_0$ such that
		\begin{equation}
			t_{hg}= (\Upsilon(\delta_\varepsilon i(1,\inv g) \delta^{-1})^{-1}\det(E_{hg}))^{1/2}	
		\end{equation} or equivalently, $\Upsilon(\delta i(1,\inv h) \delta_\varepsilon^{-1})) = 1$. Otherwise, choose
		$h$ such that 
		\begin{equation}
			\Upsilon(\delta_\varepsilon i(1,\inv g) \delta^{-1})^{-1}\det(E_{hg})t_{hg}^{-2} = \zeta
		\end{equation}
		for a fixed non-square element of $F$. We denote 
		\begin{equation}
			\Upsilon(\delta i(1,\inv h) \delta_\varepsilon^{-1}) = \zeta_g
		\end{equation} where 
		\begin{align*}
			\zeta_g =
			\begin{cases}
				1 \quad \text{$\Upsilon(\delta_\varepsilon i(1,\inv g) \delta^{-1})$ is a square},\\
				\zeta \quad \text{otherwise},
			\end{cases}
		\end{align*}
		which is independent of $h$ (and hence $x$) and depends only on $g$. 
		
		Now, $\Upsilon(p \cdot \delta i(1,\inv h) \delta_\varepsilon^{-1})= \Upsilon(w u j_c^\#) =1$ and therefore, 
		\begin{equation*}
			1 = \Upsilon(p)\Upsilon(\delta i(1,\inv h) \delta_\varepsilon^{-1}) = \det(E)t^{-2}\Upsilon(\delta i(1,\inv h) \delta_\varepsilon^{-1}) = \det(E)t^{-2}\zeta_g.
		\end{equation*} Therefore,
		\begin{equation}
			t = (\det(E) \zeta_g)^{1/2}. \label{4.25}
		\end{equation}
		Note, the choice of $t$ between the two branches of the square root does not matter, as choosing $-t$ instead of $t$ changes both $p$ and $h$ preserving \eqref{wu}. Now $\{I,-1\} \in C^0_G$ and for all $z \in C^0_G$,
		\begin{align*}
			f(zp \cdot \delta i(1,\inv z^{-1} h) \delta_\varepsilon^{-1}) &= (\chi\times\eta)(zp) \phi_U(z^{-1}\delta i(1,\inv h) \delta_\varepsilon^{-1})\\
			&= \eta(z)(\chi\times\eta)(p)\eta(z^{-1})\phi_U(\delta i(1,\inv h) \delta_\varepsilon^{-1})\\
			& = (\chi\times\eta)(p)f(\delta i(1,\inv h) \delta_\varepsilon^{-1})\\
			& = f(p\cdot \delta i(1,\inv h) \delta_\varepsilon^{-1})
		\end{align*}
		(here we use $C^0_G \cong C^0_H$ and denote both elements by $z^{-1}$). 
		
		For ease of notation, let $\chi((-\varepsilon)^c)\eta((-\varepsilon)^c)Mf(g) = M(\chi \times \eta,s)f_1(\delta_\varepsilon i(1,\inv g))$. Hence, by \eqref{4.8}--\eqref{4.25}, for $\Re(s) \gg 0$, we get 
		\begin{align*}
			Mf(g)=& \int\limits_{\substack{J\prescript{\mathrm t}{}x+xJ=0 \\ \det(I -2x) \neq 0}} \chi(\det(E)) \eta^{1/2}(\det(E)\zeta_g) | \det(E)|^{s+c/2-1} \phi_{U_0}(h_0g_0)dx\\
			=& \,|2|^{ c(s+c/2-1)}\chi((2\varepsilon)^c)\eta^{1/2}((2\varepsilon)^c) \\
			&\times \int\limits_{\substack{J\prescript{\mathrm t}{}x+xJ=0 \\ \det(I -2x) \neq 0}} \chi^{-1}(\det(I -2x)) \eta^{-1/2}(\det(I-2x)\zeta_g^{-1})|(\det(I-2x))|^{-(s+c/2-1)}\phi_{U_0}(\varepsilon\mathfrak{c}(2x)g_0)dx. 
		\end{align*}
		By change of variables $x \mapsto \frac{1}{2}x$, we can write
		\begin{align*}
			Mf(g) =& \, |2|^{\frac{c}{2}(2s-1)}\chi((2\varepsilon)^c)\eta^{1/2}((2\varepsilon)^c) \\ 
			&\times \int\limits_{\substack{J\prescript{\mathrm t}{}x+xJ=0 \\ \det(I -x) \neq 0}}	 \chi^{-1}(\det(I -x)) \eta^{-1/2}(\det(I-x)\zeta_g^{-1}) |(\det(I-x))|^{-(s+c/2-1)}\phi_{U_0}(\varepsilon\mathfrak{c}(x)g_0)dx. 
		\end{align*}
		Using \eqref{4.13}, we can change the domain of integration to $\mathfrak{g}'$ by letting $y=x$ with the measure $dy = dx$. Then,
		\begin{align*}
			Mf(g) =& \, |2|^{\frac{c}{2}(2s-1)}\chi((2\varepsilon)^c)\eta^{1/2}((2\varepsilon)^c) \\ 
			&\times \int_{\mathfrak{g}'} \chi^{-1}(\det(I -y)) \eta^{-1/2}(\det(I-y)\zeta_g^{-1}) |(\det(I-y))|^{-(s+c/2-1)}\phi_{U_0}(\varepsilon\mathfrak{c}(y)g_0)dy. \numberthis \label{4.28}
		\end{align*}
		Now by Lemma 4.2 of \cite{RS} along with the change of variables $\mathfrak{c}(y)=h$, we get
		\begin{align*}
			Mf(g) =& \, |2|^{\frac{c}{2}(2s-1)}\chi((2\varepsilon)^c)\eta^{1/2}((2\varepsilon)^c) \\ 
			& \times \int_{G^{0'}} \chi^{-1}(\det(2(I +h)^{-1})) \eta^{-1/2}(\det(2(I+h)^{-1})\zeta_g^{-1}) |(\det(2(I+h)^{-1}))|^{-(s-c/2)}\phi_{U_0}(\varepsilon hg_0)dh\\
			=& \,|2|^{\frac{c}{2}(c-1)}\chi(\varepsilon^c)\eta^{1/2}(\varepsilon^c) \\ 
			&\times \int_{G^{0'}} \chi^{-1}(\det(I +h)^{-1}) \eta^{-1/2}(\det(I+h)^{-1}\zeta_g^{-1})|(\det(I+h)^{-1})|^{-(s-c/2)}\phi_{U_0}(\varepsilon hg_0)dh\\
			=&\, |2|^{\frac{c}{2}(c-1)}\chi(\varepsilon^c)\eta^{1/2}(\varepsilon^c) \\ 
			&\times \int_{G^{0'}} \chi(\det(I +h)) \eta^{1/2}(\det(I+h)\zeta_g)|\det(I+h)|^{(s-c/2)}\phi_{U_0}(\varepsilon hg_0)dh. 	
		\end{align*} 
		for a suitable Haar measure $dh$ on $G^{0'}$ as in \cite{RS}. 
	\end{proof}
	
	We note that the following Lemma from \cite{RS} about Cayley transforms remains valid, even though the definitions of $\mathfrak{g}'$ and $G^{0'}$ are different. 
	\begin{lemma} \label{lem4.2}
		Let $v \in \mathfrak{g}'$ and $h \in G^{0'}$. Then
		\begin{equation*}
			I - \mathfrak{c}(v)h = (I - v)^{-1}(-\mathfrak{c}^{-1}(h)-v)(I + h),
		\end{equation*}
		\begin{equation}
			I + \mathfrak{c}(v)h = (I - v)^{-1}(I + v\mathfrak{c}^{-1}h)(I + h). \label{4.29}
		\end{equation}
		Hence $I + \mathfrak{c}(v)h$ is invertible if and only if $I + v\mathfrak{c}^{-1}(h)$ is invertible, in which case
		\begin{equation*}
			\mathfrak{c}^{-1}(\mathfrak{c}(v)h) = (I - v)^{-1}(\mathfrak{c}^{-1}(h)+v)(I+v\mathfrak{c}^{-1}(g))(I -v).
		\end{equation*}
	\end{lemma}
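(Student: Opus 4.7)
The plan is a direct algebraic verification built on the explicit formulas $\mathfrak{c}(v) = (I-v)^{-1}(I+v) = (I+v)(I-v)^{-1}$ and $\mathfrak{c}^{-1}(h) = (I+h)^{-1}(h-I) = (h-I)(I+h)^{-1}$, both well defined by the definitions of $\mathfrak{g}'$ and $G^{0'}$. The commutation $(h-I)(I+h) = h^2 - I = (I+h)(h-I)$ justifies the two equivalent forms for $\mathfrak{c}^{-1}$, and the single relation used repeatedly is
\begin{equation*}
\mathfrak{c}^{-1}(h)(I+h) = h - I.
\end{equation*}

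First I would verify the two displayed identities by expanding the proposed right-hand sides. For the first, the factor $(I+h)$ pushes past $\mathfrak{c}^{-1}(h)$ to give
\begin{equation*}
(I-v)^{-1}(-\mathfrak{c}^{-1}(h)-v)(I+h) = (I-v)^{-1}\bigl(-(h-I) - v(I+h)\bigr) = (I-v)^{-1}\bigl((I-v) - h(I+v)\bigr),
\end{equation*}
which equals $I - (I-v)^{-1}(I+v)h = I - \mathfrak{c}(v)h$. Identity \eqref{4.29} follows by exactly the same manipulation with signs reversed. Once both identities are in hand, the invertibility equivalence is immediate: the outer factors $(I-v)^{-1}$ and $(I+h)$ on the right of \eqref{4.29} are invertible by the definitions of $\mathfrak{g}'$ and $G^{0'}$, so $I + \mathfrak{c}(v)h$ is invertible precisely when the middle factor $I + v\mathfrak{c}^{-1}(h)$ is.

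For the closing formula, set $k = \mathfrak{c}(v)h$ and substitute into $\mathfrak{c}^{-1}(k) = (k-I)(I+k)^{-1}$: the first identity gives $k - I = (I-v)^{-1}(\mathfrak{c}^{-1}(h)+v)(I+h)$, while inverting \eqref{4.29} gives $(I+k)^{-1} = (I+h)^{-1}(I + v\mathfrak{c}^{-1}(h))^{-1}(I-v)$. The adjacent factors $(I+h)$ and $(I+h)^{-1}$ collapse, yielding the stated right-hand side.

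There is no substantive obstacle here: the statement is a purely algebraic identity on matrices, independent of any spin structure or of the specific ambient group, so the argument from \cite{RS} transfers verbatim even though the sets $\mathfrak{g}'$ and $G^{0'}$ are now attached to the present orthogonal setup.
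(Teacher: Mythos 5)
Your verification is correct, and it is in fact more than the paper provides: the paper gives no proof of this lemma at all, simply quoting it from \cite{RS} and remarking that it remains valid because the identities are pure matrix algebra, unaffected by the changed definitions of $\mathfrak{g}'$ and $G^{0'}$. Your write-up is essentially the Rallis--Soudry computation spelled out, so the two routes agree in substance; what your version buys is a self-contained check, and it also implicitly corrects two typos in the displayed statement. Two small points: in your expansion of the first identity the intermediate expression $(I-v)-h(I+v)$ should read $(I-v)-(I+v)h$ (the matrices need not commute); your final line $I-(I-v)^{-1}(I+v)h=I-\mathfrak{c}(v)h$ uses the correct form, so this is only a transcription slip. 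Second, the formula you actually derive for $\mathfrak{c}^{-1}(\mathfrak{c}(v)h)$ is $(I-v)^{-1}(\mathfrak{c}^{-1}(h)+v)(I+v\mathfrak{c}^{-1}(h))^{-1}(I-v)$, which is the correct statement (and the one in \cite{RS}); the display in the lemma as printed writes $g$ for $h$ and omits the inverse on the factor $I+v\mathfrak{c}^{-1}(h)$, so your claim that the computation ``yields the stated right-hand side'' should be understood as yielding the corrected statement rather than the literal one.
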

	
	For a matrix $x \in \text{Mat}_c(F)$, let
	$$||x||=||x||_\infty = \max_{1\leq i,j \leq c}|x_{i,j}|.$$
	We have 
	\begin{equation*}
		||k_1 x k_2|| = ||x||, \,\forall x \in \text{Mat}_c(F), \, \forall k_1,k_2 \in \GL_c(\mathcal{O}_F).
	\end{equation*}
	Let $N$ be a positive even integer such that 
	\begin{equation}
		q^N > |4|^{-1}q^4 \label{4.32}
	\end{equation}
	and such that 
	\begin{equation}
		\mathfrak{c}(\mathfrak{g}(2^{-1}\mathcal{P}_F^{\frac{N-2}{2}})) \in U_0, \label{4.33}
	\end{equation}
	defined as in \eqref{3.9}. Let $1 + \mathcal{P}_F^{N_\chi}$ and $1+\mathcal{P}_F^{N_\eta}$ be the conductors of $\chi$ and $\eta$ respectively. We will assume that $N_\chi,N_\eta > N$ and let $N_\tau = \max(N_\chi,N_\eta)$ with $n_\tau = \lfloor\frac{N_\tau +1}{2}\rfloor$.
	
	Replacing $g$ with $\varepsilon g^{-1}$ in integral \eqref{Lem4.1}, we must have $hg_0^{-1} = u \in U_0$ i.e. $h = ug_0$ for some $u \in U_0$. We can then write \eqref{Lem4.1} at $\varepsilon g^{-1}$ as 
	\begin{align*}
		Mf(\varepsilon g^{-1})
		=& \, |2|^{\frac{c}{2}(c-1)}\chi(\varepsilon^c)\eta^{1/2}(\varepsilon^c)\sum_{L=-\infty}^{\infty} \\ 
		&\times \int\limits_{\substack{||\mathfrak{c}^{-1}(ug_0)|| = q^L\\ u \in U_0}} \chi(\det(I +ug_0)) \eta^{1/2}(\det(I+ug_0)\zeta_g) |\det(I+ug_0)|^{(s-c/2)}du. \numberthis	\label{4.34}
	\end{align*}
	
	For $L \in \Z$ and $\Re(s) \gg 0$, denote
	\begin{align*}
		I_L(\chi \times \eta, g) = &\int\limits_{\substack{||\mathfrak{c}^{-1}(ug_0)|| = q^L\\ u \in U_0}} \chi(\det(I +ug_0)) \eta^{1/2}(\det(I+ug_0)\zeta_g) |\det(I+ug_0)|^{(s-c/2)}du. 
	\end{align*}
	
	\begin{lemma} \label{lem4.3}
		For all $L \geq n_\tau$
		\begin{equation*}
			I_L(\chi \times \eta, g) = 0.
		\end{equation*}
	\end{lemma}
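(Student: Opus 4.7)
The plan is to follow \cite[Lemma~4.3]{RS}: exhibit an inner translation that preserves the level set $\{u \in U_0 : \|\mathfrak{c}^{-1}(ug_0)\| = q^L\}$ but multiplies the integrand by a non-trivial additive character, so that averaging over the translation forces $I_L$ to vanish whenever $L \geq n_\tau$.

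First I would pass to the Cayley parameter: write $h = ug_0$ and $y = \mathfrak{c}^{-1}(h)$, so that the level set is $\|y\| = q^L$ and the identity $I+h = 2(I-y)^{-1}$ rewrites the integrand as (a $g$-constant times) $\chi(\det(I-y))^{-1}\eta^{1/2}(\det(I-y)\zeta_g)^{-1}|\det(I-y)|^{-(s-c/2)}$. Next, for an integer $l$ to be chosen, I would change variables $u \mapsto \mathfrak{c}(v)u$ for $v$ ranging over $\mathfrak{g}_l$; by \eqref{4.33} the image remains inside $U_0$. Lemma \ref{lem4.2} gives
\[
\det(I + \mathfrak{c}(v)h) = \det(I - v)^{-1}\det(I + vy)\det(I + h),
\]
while its second identity describes $\mathfrak{c}^{-1}(\mathfrak{c}(v)h)$ explicitly, from which one verifies that the translation preserves the condition $\|\mathfrak{c}^{-1}(\mathfrak{c}(v)ug_0)\| = q^L$ modulo a smaller sublattice, provided $l$ is chosen large enough relative to $L$ and $N_\tau$.

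With the translation in place, the multiplier $\det(I-v)^{-1}\det(I+vy)$ alters the integrand by $\chi\cdot\eta^{1/2}$ of this factor (the absolute-value piece contributes trivially because the factor lies in $1 + \mathcal{P}_F^{l-L}$ when $l > L$). For the conductor-sized range, linearize both characters: there exists $\beta \in F^\times$ with $v(\beta) \approx -(N_\tau-1)$ such that $\chi(1+x)\eta^{1/2}(1+x) = \psi(\beta x)$ for $x \in \mathcal{P}_F^{n_\tau}$. Expanding $\det(I-v)^{-1}\det(I+vy) \equiv 1 + \mathrm{tr}(v(I+y)) \pmod{\mathcal{P}_F^{N_\tau}}$, the multiplier becomes $\psi(\beta\,\mathrm{tr}(v(I+y)))$, a non-trivial additive character of $v$ on the finite quotient $\mathfrak{g}_l/\mathfrak{g}_{l+1}$ precisely because $\|y\| = q^L \geq q^{n_\tau}$ ensures that the linear functional $v \mapsto \beta\,\mathrm{tr}(v(I+y))$ has an entry of valuation $<0$. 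Averaging yields $\int_{\mathfrak{g}_l/\mathfrak{g}_{l+1}} \psi(\beta\,\mathrm{tr}(v(I+y)))\,dv = 0$ by orthogonality, hence $I_L(\chi\times\eta, g) = 0$.

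The main obstacle is to simultaneously satisfy three constraints on the lattice level $l$: (i) $\mathfrak{c}(v) \in U_0$, forcing $l$ large via \eqref{4.33}; (ii) the level set $\|y\| = q^L$ is stable under the translation, requiring control of the second Cayley-transform identity in Lemma \ref{lem4.2} to ensure the $O(v^2)$ terms do not dominate; and (iii) the induced character on $\mathfrak{g}_l/\mathfrak{g}_{l+1}$ is non-trivial modulo the conductor, forcing $l$ small relative to $L$. The bound $L \geq n_\tau = \lfloor (N_\tau+1)/2\rfloor$, together with the conditions \eqref{4.32}--\eqref{4.33} on $N$, is precisely what makes these three constraints jointly feasible. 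A secondary technicality is the case of odd $c$, where $y$ is not invertible, so one must check that $v \mapsto \mathrm{tr}(v(I+y))$ is still a non-trivial linear functional on $\mathfrak{g}_l$; this holds because $\|I+y\| = \|y\| = q^L$ and the constraints on $\mathfrak{g}$ from \eqref{2.3} do not force this trace to vanish identically.
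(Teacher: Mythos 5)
Your skeleton is the same as the paper's (and as Rallis--Soudry's): translate $u\mapsto \mathfrak{c}(v)u$ by Cayley transforms of small Lie-algebra elements, factor $\det(I+\mathfrak{c}(v)ug_0)$ via Lemma \ref{lem4.2}, linearize $\chi\cdot\eta^{1/2}$ on $1+\mathcal{P}_F^{n_\tau}$ by an additive character, and kill the integral by orthogonality. However, the decisive step is justified incorrectly. You claim the character $v\mapsto\psi(\beta\,\mathrm{tr}(v(I+y)))$ (with $y=\mathfrak{c}^{-1}(ug_0)$, $\|y\|=q^L$) is non-trivial on $\mathfrak{g}_l$ ``precisely because $\|y\|=q^L\geq q^{n_\tau}$'' produces an entry of negative valuation. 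Trace through the valuations with your own constraints: the linearization (validity of $\det(I+vy)\equiv 1+\mathrm{tr}(vy)$ modulo the conductor, and $\mathrm{tr}(vy)\in\mathcal{P}_F^{n_\tau}$) forces $l\geq L+n_\tau$, and then $v(\beta\,\mathrm{tr}(vy))\geq -N_\tau+l-L\geq n_\tau-N_\tau$ (plus a $v_F(2)$ loss coming from the symmetry \eqref{2.3}, since off-diagonal entries of $\mathfrak{g}$ come in pairs and contribute $2v_{ij}y_{ji}$ to the trace). The $L$-dependence cancels: negativity of the valuation has nothing to do with $L\geq n_\tau$; it is exactly the inequality $N_\tau-n_\tau-1>v_F(2)$, i.e. condition \eqref{4.32}, which you mention only vaguely as ``feasibility'' and never invoke at this point. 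This is how the paper closes the argument: non-vanishing of the inner Gauss sum over $\mathfrak{g}_{L+n_\tau}$ would force $\|\mathfrak{c}^{-1}(ug_0)\|\leq|2|^{-1}q^{L+n_\tau-N_\tau+1}<q^L$, contradicting $\|y\|=q^L$. Moreover, the hypothesis $L\geq n_\tau$ is actually consumed elsewhere: it guarantees $\det(I-v)\in 1+\mathcal{P}_F^{L+n_\tau}\subseteq 1+\mathcal{P}_F^{2n_\tau}\subseteq 1+\mathcal{P}_F^{N_\tau}$, so the $\chi,\eta^{1/2},|\cdot|$-factors of $\det(I-v)$ drop out (in your variant, it is what makes the quadratic error in expanding $\det(I-v)^{-1}\det(I+vy)$ land in $\mathcal{P}_F^{N_\tau}$; note also $\mathrm{tr}(v)=0$ on $\mathfrak{so}_c$, so your $\mathrm{tr}(v(I+y))$ is just $\mathrm{tr}(vy)$).

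Two smaller repairs: you never fix $l$, and averaging over the single graded piece $\mathfrak{g}_l/\mathfrak{g}_{l+1}$ is not well posed unless the character happens to be trivial on $\mathfrak{g}_{l+1}$, which fails for $l=L+n_\tau$; average over the full lattice $\mathfrak{g}_{L+n_\tau}$ (as the paper does), where orthogonality only needs non-triviality on the lattice. Finally, the worry about odd $c$ and invertibility of $y$ is a red herring: what is needed is only that the trace pairing on $\mathfrak{g}$ is non-degenerate, so some $v\in\mathfrak{g}_l$ pairs against the maximal entry of $y$ -- but, as above, with a possible loss of $v_F(2)$, which is precisely why \eqref{4.32} is stated with $|4|^{-1}$.
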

	\begin{proof}
		We make a change of variables $ u \mapsto \mathfrak{c}(v)u$ for a fixed $v \in \mathfrak{g}_{L+n_\tau} = \mathfrak{g}(\mathcal{P}_F^{L+n_\tau})$ with $\mathfrak{c}(v)u \in U_0$ by \eqref{4.33}. By Lemma \ref{lem4.2}, we know $I + \mathfrak{c}(v)ug_0$ is invertible if and only if $I + v\mathfrak{c}^{-1}(ug_0)$ is invertible. Furthermore, we have
		\begin{equation}
			||v\mathfrak{c}^{-1}(ug_0)|| \leq q^{-n_\tau} < 1 \label{4.37}
		\end{equation}
		and
		\begin{equation*}
			||\mathfrak{c}^{-1}(\mathfrak{c}(v)ug_0)|| = q^L.
		\end{equation*}
		Let $dv$ be the Haar measure on $\mathfrak{g}$ chosen prior to \eqref{4.28}. Then
		\begin{align*}
			I_L(\chi \times \eta, g) =& \frac{1}{m(\mathfrak{g}_{L+n_\tau})}
			\int\limits_{\mathfrak{g}_{L+n_\tau}} \int\limits_{\substack{||\mathfrak{c}^{-1}(ug_0)|| = q^L\\ u \in U_0}} \chi(\det(I+\mathfrak{c}(v)ug_0))\\
			&\times (\eta(\det(I+\mathfrak{c}(v)ug_0)\zeta_g))^{1/2} |\det(I+\mathfrak{c}(v)ug_0)|^{(s-c/2)}du dv, 
		\end{align*}
		where $m(\mathfrak{g}_{L+n_\tau})$ is the measure of $\mathfrak{g}_{L+n_\tau}$ under $dv$. By \eqref{4.29}, we have
		\begin{align*}
			&\chi(\det(I+\mathfrak{c}(v)ug_0)) (\eta(\det(I+\mathfrak{c}(v)ug_0)\zeta_g))^{1/2} |\det(I+\mathfrak{c}(v)ug_0)|^{(s-c/2)} \\
			&= \chi(\det(I+v\mathfrak{c}^{-1}(ug_0))) \chi(\det(I+ug_0))\\
			&\quad \times (\eta(\det(I+v\mathfrak{c}^{-1}(ug_0))\det(I+ug_0)\zeta_g))^{1/2}|\det(I+ug_o)|^{(s-c/2)}. 
		\end{align*}
		Here, we used that $\det(I-v) \in 1+ \mathcal{P}_F^{L+n_\tau} \subset 1 + \mathcal{P}_F^{2n_\tau} 
		\subset 1 + \mathcal{P}_F^{N_\tau}$, and hence it is a square. Also, as $N_\tau \geq N_\chi, N_\eta$ we have $\chi(\det(I -v)) = 1$, $(\eta(\det(I-v)))^{1/2} =1$ and $|\det(I-v)| = 1$.
		Similarly, by \eqref{4.37}, $|\det(I + v\mathfrak{c}^{-1}(ug_0))| = 1$. Now, using the observations on page 321 of \cite{JS} and \eqref{4.37}, we have
		\begin{equation*}
			\det(I+v\mathfrak{c}^{-1}(ug_0)) \equiv 1 + \text{trace}(v\mathfrak{c}^{-1}(ug_0)) \quad(\text{mod} \mathcal{P}_F^{2n_\tau})
		\end{equation*}
		and
		\begin{equation}
			\text{trace}(v\mathfrak{c}^{-1}(ug_0))\in \mathcal{P}_F^{n_\tau}. \label{4.44}
		\end{equation}
		Note, this implies $\det(I + v\mathfrak{c}^{-1}(ug_0))$ is a square. Hence, $\eta^{1/2}(I + v\mathfrak{c}^{-1}(ug_0))$ is well defined and has a unique solution by \eqref{4.44}. Let $\tau$ denote the character $\chi \times \eta^{1/2}$ defined on $1 + \mathcal{P}_F^{n_\tau}$. Then the conductor of $\tau$ is $1 + \mathcal{P}_F^{N_\tau}$, as defined above. 
		
		The map $x \mapsto 1+x$ defines a group isomorphism 
		$$\mathcal{P}_F^{n_\tau}/\mathcal{P}_F^{N_\tau} \rightarrow 1+ \mathcal{P}_F^{n_\tau}/ 1+\mathcal{P}_F^{N_\tau}.$$
		As $2n_\tau \geq N_\tau$, the map $x \mapsto \tau(1+x)$ defines a character of $\mathcal{P}_F^{n_\tau}/\mathcal{P}_F^{N_\tau}$. Fix a character $\phi_0$ of $F$ whose conductor is $\mathcal{O}_F$. Then, there is $a \in F^\times$ such that $|a| = q^{N_\tau}$ and
		\begin{equation*}
			\tau(1+x) = \phi_0(ax), \qquad \forall x \in \mathcal{P}_F^{n_\tau}.
		\end{equation*}
		Therefore,
		\begin{equation*}
			\tau(\det(I+v\mathfrak{c}^{-1}(ug_0))) = \tau(1 + \text{trace}(v\mathfrak{c}^{-1}(ug_0))) = \phi_0(a \cdot \text{trace}(v\mathfrak{c}^{-1}(ug_0))).
		\end{equation*}
		Hence,
		\begin{align*}
			I_L(\chi \times \eta, g) = \frac{1}{m(\mathfrak{g}_{L+n_\tau})}\int\limits_{\substack{||\mathfrak{c}^{-1}(ug_0)|| = q^L\\ u \in U_0}}& \chi(\det(I+ug_0)) (\eta(\det(I+ug_0))\zeta_g)^{1/2}
			|\det(I+ug)|^{(s-c/2)}\\
			& \times \left( \int\limits_{v \in \mathfrak{g}_{L+n_\tau}}\phi_0(a \cdot \text{trace}(v\mathfrak{c}^{-1}(ug_0))) dv\right)du. \numberthis \label{phi0}
		\end{align*}
		The inner integral of \eqref{phi0} is 
		\begin{equation}
			\int\limits_{\substack{J\prescript{\mathrm t}{}x+xJ=0 \\ x \in \mathfrak{g}(L+n_\tau)}}\phi_0 (a \cdot \text{trace}(x\mathfrak{c}^{-1}(ug_0)))dx.\label{4.46}
		\end{equation}
		For this integral to be non-zero, we must have 
		\begin{equation*}
			||a \mathfrak{c}^{-1}(ug_0)|| \leq |2|^{-1}q^{L+n_\tau+1}
		\end{equation*}
		and hence
		\begin{equation*}
			||\mathfrak{c}^{-1}(ug_0)|| \leq |2|^{-1}q^{L+n_\tau -N_\tau +1}.
		\end{equation*}
		By \eqref{4.32}, $|2|^{-1}q^{L+n_\tau - N_\tau +1} <1$ and hence, $||\mathfrak{c}^{-1}(ug_0)|| < q^L$, a contradiction and hence \eqref{4.46} is zero, proving the lemma. 
	\end{proof}
	
	\begin{lemma} We have 
		\begin{equation*}
			I_L(\chi \times \eta, g) = 0
		\end{equation*}
		for all $0 < L < n_\tau$ and
		\begin{equation*}
			I_{-L}(\chi \times \eta, g) = 0
		\end{equation*}
		for all $0 \leq L < N_\tau - n_\tau -1 - \nu_F(2)$, with $v_F$ the valuation over $F$.
	\end{lemma}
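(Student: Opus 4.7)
My plan is to mirror the proof of Lemma \ref{lem4.3} and adjust only the lattice $\mathfrak{g}_M$ over which the translation parameter $v$ in the change of variables $u \mapsto \mathfrak{c}(v) u$ ranges, tailoring $M$ to the regime of $L$. The mechanism of the previous proof---use the Cayley identity \eqref{4.29} and the Taylor expansion from \eqref{4.44} to reduce the inner integral over $v$ to $\int_{\mathfrak{g}_M} \phi_0(a \cdot \text{trace}(v \mathfrak{c}^{-1}(ug_0))) \, dv$, which vanishes as soon as $||a \mathfrak{c}^{-1}(ug_0)|| > |2|^{-1} q^{M+1}$---carries over verbatim once $M$ is chosen compatibly with both the Taylor estimate on $v \mathfrak{c}^{-1}(ug_0)$ and the character-triviality of $\det(I-v)$.

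For the range $0 < L < n_\tau$, I keep $v \in \mathfrak{g}_{L+n_\tau}$, exactly as in Lemma \ref{lem4.3}. The only step in that proof which used $L \geq n_\tau$ was the inclusion $\det(I-v) \in 1 + \mathcal{P}_F^{L+n_\tau} \subset 1 + \mathcal{P}_F^{2n_\tau}$. Since $v \in \mathfrak{g}$ is $J$-skew, $\text{trace}(v) = 0$, so the expansion of $\det(I-v)$ has no linear term and we obtain the stronger statement $\det(I-v) \in 1 + \mathcal{P}_F^{2(L+n_\tau)}$. For $L \geq 0$ this gives $2(L+n_\tau) \geq 2n_\tau \geq N_\tau$, so $\det(I-v) \in 1 + \mathcal{P}_F^{N_\tau}$ is preserved and $\chi$, $\eta^{1/2}$ remain trivial on this factor. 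The remainder of the argument is unchanged: the inner character integral vanishes iff $q^L > |2|^{-1} q^{L + n_\tau - N_\tau + 1}$, which reduces to the $L$-independent inequality $N_\tau - n_\tau - 1 > \nu_F(2)$ guaranteed by \eqref{4.32}.

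For the range $0 \leq L < N_\tau - n_\tau - 1 - \nu_F(2)$, the quantity $||\mathfrak{c}^{-1}(ug_0)|| = q^{-L}$ is small, so I instead take $v$ in the larger lattice $\mathfrak{g}_{n_\tau}$. Then $\det(I-v) \in 1 + \mathcal{P}_F^{2n_\tau} \subset 1 + \mathcal{P}_F^{N_\tau}$ by the same trace-zero argument, and $||v \mathfrak{c}^{-1}(ug_0)|| \leq q^{-n_\tau - L} < 1$, so the Cayley identities of Lemma \ref{lem4.2} apply and the Taylor expansion $\det(I + v\mathfrak{c}^{-1}(ug_0)) \equiv 1 + \text{trace}(v \mathfrak{c}^{-1}(ug_0)) \pmod{\mathcal{P}_F^{2n_\tau}}$ remains valid. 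The identification $\tau(1 + \text{trace}(v \mathfrak{c}^{-1}(ug_0))) = \phi_0(a \cdot \text{trace}(v \mathfrak{c}^{-1}(ug_0)))$ goes through exactly as in Lemma \ref{lem4.3}, and the resulting inner integral vanishes precisely when $q^{N_\tau - L} > |2|^{-1} q^{n_\tau + 1}$, i.e., $L < N_\tau - n_\tau - 1 - \nu_F(2)$, which is the hypothesis.

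The main subtlety is the simultaneous matching of $M$ to both $L$ and $N_\tau$: the translation parameter $v$ must be small enough that $v \mathfrak{c}^{-1}(ug_0)$ is negligible and the Taylor approximation is valid, yet large enough that $\det(I-v)$ lies in the kernel of both $\chi$ and $\eta^{1/2}$. The trace-zero refinement of the estimate on $\det(I-v)$ is what reconciles these two requirements in each of the two regimes.
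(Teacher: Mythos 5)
Your proposal is correct and follows essentially the route the paper intends: the paper omits the details, deferring to Rallis--Soudry's Lemmas 4.5--4.6 combined with the smoothing argument of Lemma \ref{lem4.3}, and your proof is exactly that argument with the translation lattice retuned ($\mathfrak{g}_{L+n_\tau}$ for $0<L<n_\tau$, $\mathfrak{g}_{n_\tau}$ for the nonpositive range), the observation $\operatorname{trace}(v)=0$ for $v\in\mathfrak{g}$ supplying $\det(I-v)\in 1+\mathcal{P}_F^{2(L+n_\tau)}\subset 1+\mathcal{P}_F^{N_\tau}$ where the paper's cruder estimate no longer suffices. Your lattice choices also recover precisely the asymmetric thresholds of the statement (the $L$-independent condition $N_\tau-n_\tau-1>\nu_F(2)$ from \eqref{4.32} in the first range, and $L<N_\tau-n_\tau-1-\nu_F(2)$ in the second), so the argument is complete.
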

	
	The lemma is similar to \cite[Lemmas 4.5--4.6]{RS} and the proof follows from their proof along with the modification presented in proof of Lemma \ref{lem4.3}. As such, we omit the details here. 
	
	From these two lemmas, we can conclude that in \eqref{4.34}, for $\Re(s) \gg 0$,
	\begin{align*}
		Mf(\varepsilon g^{-1})
		=& \, |2|^{\frac{c}{2}(c-1)}\chi(\varepsilon^c)\eta^{1/2}(\varepsilon^c)\\ 
		&\times \int\limits_{\substack{||\mathfrak{c}^{-1}(ug_0)|| = q^{-(N_\tau - n_\tau -1 - \nu_F(2))}\\ u \in U_0}} \chi(\det(I +ug_0)) \eta^{1/2}(\det(I+ug_0)\zeta_g) |\det(I+ug_0)|^{(s-c/2)}du. \numberthis \label{4.51}
	\end{align*}
	The domain of integration in \eqref{4.51} is over $u\in U_0$, such that $ug_0 \in \mathfrak{c}(\mathfrak{g}(2^{-1}$ $\mathcal{P}_F^{N_\tau - n_\tau -1}))\subset U_0$, by \eqref{4.33}. Hence, $g_0 \in U_0$ and $MF(\varepsilon g^{-1})$ is supported in $U$. In particular, the condition of Claim \ref{claim2} is satisfied and hence $\zeta_g = 1$ over the entire support. For $g \in U$, we may change the variable $u \mapsto ug_0^{-1}$ in \eqref{4.51} and get that
	$Mf(\varepsilon g^{-1})$ is constant on $U_0$ and hence on $U$ up to $C^0_G$. As the local zeta integral is defined modulo $C^0_G$, for the purpose of computing the integral we can take $Mf(\varepsilon g^{-1})$ equal to $Mf(\varepsilon\cdot 1_G)$. From \eqref{4.51}, as $|\det(I_c+u)|=|2|^c$ for $u$ close to $I_c$, 
	\begin{align*}
		Mf(\varepsilon\cdot 1_G) =|2|^{\frac{c}{2}(2s-1)}\chi(\varepsilon^c)\eta^{1/2}(\varepsilon^c)\int\limits_{\substack{||\mathfrak{c}^{-1}(u)|| = |2|^{-1}q^{-(N_\tau - n_\tau -1)}\\ u \in U_0}} \chi(\det(I +u)) \eta^{1/2}(\det(I+u)) du. 
	\end{align*}
	By change of variables $u = \mathfrak{c}(v)$, $v\in \mathfrak{g}(2^{-1} \mathcal{P}_F^{(N_\tau - n_\tau -1)})$ and choosing $du = dv$ with $I + \mathfrak{c}(v) = 2(I+v)^{{-1}}$, we get 
	\begin{align*}
		Mf(\varepsilon \cdot 1_G) = |2|^{\frac{c}{2}(2s-1)}\chi((2\varepsilon)^c)\eta^{1/2}((2\varepsilon)^c) \int\limits_{\mathfrak{g}(2^{-1}q^{N_\tau-n_\tau-1})} \chi^{-1}(\det(I -v)) 
		\eta^{-1/2}(\det(I-v)) dv. 
	\end{align*}
	This shows the result for $\Re(s) \gg 0$, which can be analytically continued to all $s$. Hence, using that $v_1$ is $U$-invariant in \eqref{4.2} and that $\langle v_1, \widehat{v}_2 \rangle =1$, we get that 
	\begin{equation*}
		\Gamma(s, \pi \times \chi, \psi) = 	M^*(\chi \times \eta,s)f_1(\delta_\varepsilon i(1,\varepsilon)).
	\end{equation*}
	This concludes the proof for Theorem \ref{thm3.1}. \hfill $\blacksquare$

\end{document}